\renewcommand{\phi}{\varphi}
\newcommand{\R}{\mathbb{R}}
\def\ds1{\mathds{1}}
\renewcommand{\epsilon}{\varepsilon}
\newcommand{\argmin}{\mathop{\mathrm{arg\,min}}}
\renewcommand{\tilde}{\widetilde}
\newlength{\minipagewidth}
\newcommand{\beq}{\begin{equation}}
\newcommand{\eeq}{\end{equation}}
\newcommand{\beqa}{\begin{eqnarray}}
\newcommand{\eeqa}{\end{eqnarray}}
\newcommand{\beqan}{\begin{eqnarray*}}
\newcommand{\eeqan}{\end{eqnarray*}}
\def\ba#1\ea{\begin{align*}#1\end{align*}}
\def\banum#1\eanum{\begin{align}#1\end{align}}
\newtheorem{theorem}{Theorem}[section]
\newtheorem{lemma}[theorem]{Lemma}
\newtheorem{remark}[theorem]{Remark}
\title{Near-optimal method for highly smooth convex optimization}
\def\And{\and}
\author{S\'ebastien Bubeck\\
       Microsoft Research\\
			 \And
			 Qijia Jiang \\
       Stanford University \\
			 \And
Yin Tat Lee \thanks{Research was supported in part by NSF Awards CCF-1740551, CCF-1749609, and DMS-1839116.}\\
       University of Washington \\ \& Microsoft Research\\
			 \And
       Yuanzhi Li  \\
      Stanford University\\
			 \And
      Aaron Sidford \thanks{Research was supported in part by NSF CAREER Award CCF-1844855.}\\
      Stanford University \\
}
\begin{document}

\maketitle

\begin{abstract}
We propose a near-optimal method for highly smooth convex optimization. More precisely, in the oracle model where one obtains the $p^{th}$ order Taylor expansion of a function at the query point, we propose a method with rate of convergence $\tilde{O}(1/k^{\frac{ 3p +1}{2}})$ after $k$ queries to the oracle for any convex function whose $p^{th}$ order derivative is Lipschitz. 
\end{abstract}

\section{Introduction}
In this paper we generalize the important phenomenon of {\em acceleration} in smooth convex optimization \cite{NY83, Nem82, Nes83} to higher orders of smoothness. We consider a $p^{th}$-order Taylor expansion oracle, that is given a query point $x \in \R^d$ it returns a $p^{th}$ order Taylor expansion of the objective function $f$ at the point $x$:
$$
f_{p}(y,x)=f(x)+\sum_{i=1}^{p}\frac{1}{i!}\nabla^{i}f(x)[y-x]^{i}.
$$
We propose a new optimization method based on such oracle, see Algorithm 1, which we term {\em accelerated Taylor descent (ATD)}. We prove that it attains a nearly optimal rate of convergence under higher order smoothness (the matching lower bounds were recently proven in \cite{AH18, ASS18}), namely after $\tilde{O}(k)$ calls to the oracle it achieves error $O(1/k^{\frac{ 3p +1}{2}})$. This improves upon the $O(1/k^{p+1})$ derived in \cite{Nes18} (both rates match for $p=1$, i.e., the classical acceleration setting), and it matches the rate given in \cite{MS13} for $p=2$.
\begin{theorem} \label{thm:main}
 Let $f$ denote a convex function whose $p^{th}$ derivative is $L_p$-Lipschitz and let $x^*$ denote a minimizer of $f$. Then ATD satisfies, with $c_p = 2^{p-1} (p+1)^{\frac{3p+1}{2}} / (p-1)!$,
 \begin{equation} \label{eq:firststatement}
 f(y_k) - f(x^*) \leq \frac{c_p \cdot L_p \cdot \|x^*\|^{p+1}}{k^{\frac{ 3p +1}{2}}} \,.
 \end{equation}
Furthermore each iteration of ATD can be implemented in $\tilde{O}(1)$ calls to a $p^{th}$-order Taylor expansion oracle. More precisely, given a precision $\epsilon >0$, at each iteration $k$, using at most 
\[
30p\log_{2}p+\log_{2}\left\lceil \frac{L_{p}\|x^{*}\|^{p+1}}{\epsilon}\right\rceil
\] 
calls to the $p^{th}$-order Taylor expansion oracle we find either a point $y$ such that $f(y) - f(x^*) \leq \epsilon$, or we find $y_{k}$.
\end{theorem}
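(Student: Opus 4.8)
The plan is to recognize ATD as an instance of the Monteiro--Svaiter accelerated-proximal scheme in which the proximal step is replaced by a regularized $p$-th order Taylor step, and to read off the rate from a potential argument combined with an elementary estimate on the growth of the weight sequence $A_k$. Concretely, I would track $y_k$, $z_k$ and the scalars $A_k$ (with $a_{k+1}=A_{k+1}-A_k$), set $\tilde x_{k+1}=\frac{A_k}{A_{k+1}}y_k+\frac{a_{k+1}}{A_{k+1}}z_k$, let $y_{k+1}$ minimize the model $m_{k+1}(y)=f_p(y,\tilde x_{k+1})+\frac{M}{(p+1)!}\|y-\tilde x_{k+1}\|^{p+1}$ for $M$ a fixed multiple of $L_p$, and take $z_{k+1}=z_k-a_{k+1}\nabla f(y_{k+1})$. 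The first step is the potential lemma: for $\Phi_k=A_k(f(y_k)-f(x^*))+\frac12\|z_k-x^*\|^2$, convexity of $f$ together with the Monteiro--Svaiter condition $\|\lambda_{k+1}\nabla f(y_{k+1})+y_{k+1}-\tilde x_{k+1}\|\le\sigma\|y_{k+1}-\tilde x_{k+1}\|$ (with $\sigma<\frac12$) and the coupling $a_{k+1}^2\asymp\lambda_{k+1}A_{k+1}$, the proportionality constant $c_\sigma$ kept below the threshold $\frac{2(1-2\sigma)}{1-\sigma}$, give $\Phi_{k+1}\le\Phi_k-E_{k+1}$ with $E_{k+1}\gtrsim\frac{A_{k+1}}{\lambda_{k+1}}\|y_{k+1}-\tilde x_{k+1}\|^2\ge 0$. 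Hence $f(y_k)-f(x^*)\le\frac{\|x^*\|^2}{2A_k}$ and $\sum_k E_k\le\frac12\|x^*\|^2$.

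The second ingredient is that the Taylor step realizes the Monteiro--Svaiter condition with a step size that is an inverse power of the displacement. With $u_{k+1}:=\|y_{k+1}-\tilde x_{k+1}\|$, optimality of $y_{k+1}$ for $m_{k+1}$ reads $\nabla f_p(y_{k+1},\tilde x_{k+1})+\frac{M}{p!}u_{k+1}^{p-1}(y_{k+1}-\tilde x_{k+1})=0$, and $L_p$-Lipschitzness of $\nabla^p f$ gives $\|\nabla f(y_{k+1})-\nabla f_p(y_{k+1},\tilde x_{k+1})\|\le\frac{L_p}{p!}u_{k+1}^p$; so taking $\lambda_{k+1}=\frac{p!}{M u_{k+1}^{p-1}}$ makes the condition hold with $\sigma=L_p/M$, which is $<\frac12$ once $M$ is chosen large enough. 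Substituting this $\lambda_{k+1}$ and the coupling into $E_{k+1}$, and using $\|\nabla f(y_{k+1})\|\asymp u_{k+1}/\lambda_{k+1}$, the per-step decrease becomes, up to $p$-dependent constants, $E_{k+1}\gtrsim(p!/M)^{2/(p-1)}\,A_{k+1}^{2p/(p-1)}/a_{k+1}^{(2p+2)/(p-1)}$. Summing and using $\sum_k E_k\le\frac12\|x^*\|^2$ yields the central inequality
\[
\sum_{k=1}^{n}\frac{A_k^{\,2p/(p-1)}}{a_k^{\,(2p+2)/(p-1)}}\ \lesssim_p\ \Big(\tfrac{M}{p!}\Big)^{2/(p-1)}\|x^*\|^2 .
\]

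The rate now follows from an elementary lemma on nondecreasing sequences: if $0=A_0\le A_1\le\cdots$ and $\sum_{k=1}^n A_k^{r}/a_k^{s}\le B$ with $s>r>0$, then $A_n\ge c_{r,s}\,(n^{\,s+1}/B)^{1/(s-r)}$. The crucial point is that this is a bound on a \emph{sum}: the quantity $\sum A_k^r/a_k^s$ at a fixed total $A_n$ is, up to constants, minimized by spreading the increments $a_k$ roughly evenly, and this is what creates the exponent $\frac{s+1}{s-r}$; a pointwise estimate of individual terms gives only a strictly weaker exponent and misses the acceleration. With $r=\frac{2p}{p-1}$, $s=\frac{2p+2}{p-1}$ one has $s-r=\frac{2}{p-1}$ and $\frac{s+1}{s-r}=\frac{3p+1}{2}$, so $A_n\gtrsim_p n^{(3p+1)/2}\big((M/p!)\,\|x^*\|^{p-1}\big)^{-1}$; combining with $f(y_n)-f(x^*)\le\|x^*\|^2/(2A_n)$ and taking $M=(p+1)L_p$ — the choice that makes $c_p=2^{p-1}(p+1)^{(3p+1)/2}/(p-1)!$ come out exactly after chasing constants — gives \eqref{eq:firststatement}.

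The remaining, and in my view the main, obstacle is implementing one iteration in $\tilde O(1)$ oracle calls, which amounts to solving for $(a_{k+1},\lambda_{k+1})$ the coupled system above when $\tilde x_{k+1}$ itself depends on $a_{k+1}$ — a one-dimensional fixed-point problem. I would fix $\lambda_{k+1}=p!/(Mu_{k+1}^{p-1})$ by fiat, so the Monteiro--Svaiter condition holds for every trial, and then bisect over $a\in(0,\infty)$ to drive the scalar defect $g(a):=\frac{M a^2 u(a)^{p-1}}{c_\sigma p!\,(A_k+a)}$, where $u(a)=\|y_{k+1}(a)-\tilde x_{k+1}(a)\|$ and $y_{k+1}(a)$ is the minimizer of $m_{k+1}$ built at $\tilde x_{k+1}(a)$, into a fixed target window strictly inside $(0,1)$; each trial costs exactly one oracle query (forming the Taylor expansion at $\tilde x_{k+1}(a)$ — minimizing the resulting model is an internal computation, not charged to the oracle). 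Two quantitative points finish the count: (i) one must show the search interval for $a$ can be localized to within a factor $p^{O(p)}$ — the $(p-1)$-st power in $u(a)^{p-1}$ is exactly what inflates a $\mathrm{poly}(p)$-sized range of the displacement into this bound, producing the $O(p\log p)$ term — and that $g$ is suitably monotone along the search so that the bisection converges, which in turn needs control of how $u(a)$ moves as $\tilde x_{k+1}(a)$ slides along the segment $[y_k,z_k]$; and (ii) if any trial point $y$ has displacement below a threshold of order $(\epsilon/(L_p\|x^*\|))^{1/p}$ (up to $p!$ factors), then $\|\nabla f(y)\|$ is small enough that $f(y)-f(x^*)\le\langle\nabla f(y),y-x^*\rangle\le\epsilon$, so we stop and return $y$; this caps the needed resolution and produces the $\log_2\lceil L_p\|x^*\|^{p+1}/\epsilon\rceil$ term. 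Assembling (i) and (ii) gives the stated $30p\log_2 p+\log_2\lceil L_p\|x^*\|^{p+1}/\epsilon\rceil$ bound, and hence the theorem.
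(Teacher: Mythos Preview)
Your proposal is correct and follows essentially the same architecture as the paper: the Monteiro--Svaiter potential, the regularized Taylor step realized as an approximate implicit-gradient step with $\lambda_{k+1}\asymp u_{k+1}^{-(p-1)}$, the summability bound $\sum_i \tfrac{A_i}{\lambda_i}u_i^2\lesssim\|x^*\|^2$, and a one-dimensional search for the coupled pair $(\lambda_{k+1},y_{k+1})$.

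Two small points of comparison. First, your ``elementary lemma on nondecreasing sequences'' is true, but its proof is precisely the reverse-H\"older-plus-recursion argument the paper spells out in Lemmas~\ref{lem:Abound} and~\ref{lem:recursion}: writing $a_k=(A_k^r/T_k)^{1/s}$ with $T_k=A_k^r/a_k^s$ and applying reverse H\"older yields $A_n\ge(\sum_k A_k^{r/(s+1)})^{(s+1)/s}B^{-1/s}$, which is then solved as a recursion. So you have not found a shortcut there, only repackaged the same computation. Second, the paper keeps the coupling $a_{k+1}^2=\lambda_{k+1}A_{k+1}$ exact and instead lets the scalar $\lambda_{k+1}\tfrac{L_p}{(p-1)!}u_{k+1}^{p-1}$ land in the window $[\tfrac12,\tfrac{p}{p+1}]$, parametrizing the search by the convex-combination coefficient $\theta\in[0,1]$; your version moves the slack into a proportionality constant $c_\sigma$ and bisects over $a\in(0,\infty)$. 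These are equivalent flexibilities, though the paper's choice has the minor advantage that the search variable lives in a bounded interval from the outset, which streamlines the Lipschitz analysis of Section~\ref{sec:implementation}. Your threshold $c_\sigma<\tfrac{2(1-2\sigma)}{1-\sigma}$ is stricter than needed: the paper's potential argument (Lemma~\ref{lem:basic4}) only requires $\sigma\le 1$ for the basic decrease and $\sigma<1$ for the refined sum bound.
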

Our method is largely inspired by \cite{MS13},
which
focuses on $p=2$
, and we recall their framework in Section \ref{sec:MS}. We then specialize this framework to higher order smoothness in Section \ref{sec:ATD}, where we derive and analyze ATD. A subtle point of ATD is that an iteration requires more than one call to the oracle due to the ``line-search'' [line 4, Algorithm 1]. We prove that $\tilde{O}(1)$ calls suffice to implement an iteration in Section \ref{sec:implementation}.

We note that the independent work \cite{Gasnikov18}, currently only available in Russian, derive a similar result to \eqref{eq:firststatement}. From our understanding of their work it seems however that they do not work out the precise complexity of the binary search step (second part of the statement in Theorem \ref{thm:main}, see also Section \ref{sec:implementation}). Finally we note that yet another independent work \cite{JWZ18} was posted on the arxiv a couple of days prior to us, with a similar result to Theorem \ref{thm:main}. Interestingly it seems that their argument to control the complexity of the binary search is different (at least on the surface) from ours.

\begin{algorithm} [h!]
\caption{Accelerated Taylor Descent}\label{alg:highorder}
	\begin{algorithmic}[1]
		\STATE \textbf{Input:} convex function $f : \R^d \rightarrow \R$ such that $\nabla^p f$ is $L_p$-Lipschitz.
		\STATE Set $A_0 = 0, x_0 = y_0 = 0$
		\FOR{ $k = 0$ \TO $k = K- 1$ }
		\STATE Compute a pair $\lambda_{k+1} > 0$ and $y_{k+1}\in \R^d$ such that
		\[
\frac{1}{2} \leq \lambda_{k+1} \frac{L_p \cdot \|y_{k+1} - \tilde{x}_k\|^{p-1}}{(p-1)!}  \leq \frac{p}{p+1} \,,
\]
where
\[
y_{k+1} = \argmin_y \left\{
f_p(y;\tilde{x}_k)+\frac{L_p}{p!}\|y-\tilde{x}_k\|^{p+1}\right\} \,,
\]
and
		\[
a_{k+1} = \frac{\lambda_{k+1}+\sqrt{\lambda_{k+1}^2+4\lambda_{k+1}A_k}}{2} 
\text{ , } 
A_{k+1} = A_k+a_{k+1}
\text{ , and } 
\tilde{x}_k = \frac{A_k}{A_{k + 1}}y_k + \frac{a_{k+1}}{A_{k+1}} x_k \,.
		\]
		\STATE Update $x_{k+1} := x_k-a_{k+1} \nabla f(y_{k+1})$
		\ENDFOR
		\RETURN $y_{K}$ 
	\end{algorithmic}	
\end{algorithm}

\begin{remark}
The definition of $a_{k+1}$ was chosen such that $\lambda_{k + 1} A_{k + 1} = a_{k+1}^2$. To see this, note that $a_{k + 1}$ is a solution to $a_{k+1}^2 - \lambda_{k + 1} a_{k+1} - \lambda_{k + 1} A_k = 0$, which is equivalent as $A_{k + 1} = A_k + a_k$.
\end{remark}

\section{Monteiro-Svaiter acceleration framework} \label{sec:MS}
Recall that Nesterov's accelerated gradient descent \cite{Nes83, Nes04} produces a sequence of the form:
\begin{equation} \label{eq:agd}
y_{k+1} = \tilde{x}_k - \lambda_{k+1} \nabla f(\tilde{x}_k) \,,
\end{equation}
for some step size $\lambda_{k+1}$ and ``momentum'' point $\tilde{x}_k$. In this section we consider a variant proposed by Monteiro and Svaiter which replaces the gradient step by a form of ``implicit gradient step'', namely:
\[
y_{k+1} \simeq \tilde{x}_k - \lambda_{k+1} \nabla f(y_{k+1}) \,.
\]
The rest of the section is merely a rewriting of \cite{MS13}, with the objective to motivate and prove the following result:
\begin{theorem} \label{thm:MS}
Let $(y_k)_{k \geq 1}$ be a sequence of points in $\R^d$ and $(\lambda_k)_{k \geq 1}$ a sequence in $\R_+$. Define $(a_k)_{k \geq 1}$ such that $\lambda_k A_k = a_k^2$ where $A_k = \sum_{i=1}^k a_i$. Define also for any $k\geq 0$, $x_k = - \sum_{i=1}^k a_i \nabla f(y_i)$ (in particular $x_0 = 0$) and $\tilde{x}_k := \frac{a_{k+1}}{A_{k+1}} x_{k} + \frac{A_k}{A_{k+1}} y_k$. Finally assume that
\begin{equation} \label{eq:igd}
\|y_{k+1} - (\tilde{x}_k - \lambda_{k+1} \nabla f(y_{k+1}))\| \leq \|y_{k+1} - \tilde{x}_k\| \,.
\end{equation}
Then one has for any $x \in \R^d$,
\begin{equation} \label{eq:rate}
f(y_k) - f(x) \leq \frac{2 \|x\|^2}{\left(\sum_{i=1}^k \sqrt{\lambda_i} \right)^2}  \,.
\end{equation}
Furthermore if one has the following refined guarantee, for some $\sigma \in [0,1]$,
\begin{equation} \label{eq:igdrefined}
\|y_{k+1} - (\tilde{x}_k - \lambda_{k+1} \nabla f(y_{k+1}))\| \leq \sigma \cdot \|y_{k+1} - \tilde{x}_k\| \,,
\end{equation}
then one also has
\begin{equation} \label{eq:Alambdatradeoff}
\sum_{i=1}^k \frac{A_i}{\lambda_i} \|y_i - \tilde{x}_{i-1}\|^2 \leq \frac{\|x^*\|^2}{1-\sigma^2} \,.
\end{equation}
\end{theorem}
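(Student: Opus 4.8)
The plan is to run the potential-function (Lyapunov) argument that underlies Monteiro--Svaiter acceleration. For a fixed comparison point $x$, set
\[
\Phi_k := A_k\bigl(f(y_k) - f(x)\bigr) + \tfrac12\|x_k - x\|^2 ,
\]
and note that, because $A_0 = 0$ and $x_0 = 0$, we have $\Phi_0 = \tfrac12\|x\|^2$. The whole proof reduces to showing $\Phi_{k+1}\le\Phi_k$ for all $k\ge 0$: the rate \eqref{eq:rate} then follows from $\Phi_k\le\Phi_0$ together with a lower bound on $A_k$.

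To control $\Phi_{k+1}-\Phi_k$ I would first expand the quadratic term via the update $x_{k+1}=x_k-a_{k+1}\nabla f(y_{k+1})$, giving
\[
\tfrac12\|x_{k+1}-x\|^2-\tfrac12\|x_k-x\|^2 = -a_{k+1}\langle\nabla f(y_{k+1}),x_k-x\rangle + \tfrac12 a_{k+1}^2\|\nabla f(y_{k+1})\|^2 ,
\]
and then upper bound $A_{k+1}f(y_{k+1})-A_kf(y_k)-a_{k+1}f(x)$ by using convexity of $f$ at $y_{k+1}$ towards both $y_k$ and $x$; the identity $A_{k+1}=A_k+a_{k+1}$ cancels the three $f(y_{k+1})$ contributions. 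Collecting terms, the $x$-dependent inner products combine into $-a_{k+1}\langle\nabla f(y_{k+1}),x_k-y_{k+1}\rangle$, and then the defining relation $A_{k+1}\tilde x_k = A_k y_k + a_{k+1}x_k$ rewrites $-A_k(y_k-y_{k+1})-a_{k+1}(x_k-y_{k+1}) = A_{k+1}(y_{k+1}-\tilde x_k)$. Finally applying $a_{k+1}^2=\lambda_{k+1}A_{k+1}$ to the squared-gradient term yields
\[
\Phi_{k+1}-\Phi_k \;\le\; A_{k+1}\Bigl(\langle\nabla f(y_{k+1}),\,y_{k+1}-\tilde x_k\rangle + \tfrac{\lambda_{k+1}}{2}\|\nabla f(y_{k+1})\|^2\Bigr).
\]
This bookkeeping is where all the care is needed, though it is purely algebraic and the steps are essentially forced.

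The structural input is the implicit-gradient condition \eqref{eq:igd}: squaring $\|y_{k+1}-\tilde x_k+\lambda_{k+1}\nabla f(y_{k+1})\|^2\le\|y_{k+1}-\tilde x_k\|^2$, expanding, and dividing by $2\lambda_{k+1}>0$ gives exactly $\langle\nabla f(y_{k+1}),y_{k+1}-\tilde x_k\rangle + \tfrac{\lambda_{k+1}}{2}\|\nabla f(y_{k+1})\|^2\le 0$, hence $\Phi_{k+1}\le\Phi_k$ and so $A_k(f(y_k)-f(x))\le\tfrac12\|x\|^2$. To convert this into \eqref{eq:rate} I would establish the elementary inequality $\sqrt{A_k}\ge\tfrac12\sum_{i=1}^k\sqrt{\lambda_i}$: from $a_i=\sqrt{\lambda_i A_i}$,
\[
\sqrt{A_i}-\sqrt{A_{i-1}} = \frac{a_i}{\sqrt{A_i}+\sqrt{A_{i-1}}} \ge \frac{a_i}{2\sqrt{A_i}} = \frac{\sqrt{\lambda_i}}{2},
\]
and telescoping (with $A_0=0$) gives the bound; substituting $A_k\ge\tfrac14\bigl(\sum_{i=1}^k\sqrt{\lambda_i}\bigr)^2$ finishes the first claim.

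For the refined statement I would redo the squaring step with \eqref{eq:igdrefined} in place of \eqref{eq:igd}, which now produces the slack $\langle\nabla f(y_{k+1}),y_{k+1}-\tilde x_k\rangle + \tfrac{\lambda_{k+1}}{2}\|\nabla f(y_{k+1})\|^2 \le -\tfrac{1-\sigma^2}{2\lambda_{k+1}}\|y_{k+1}-\tilde x_k\|^2$, so that
\[
\Phi_{k+1}-\Phi_k \le -\frac{1-\sigma^2}{2}\cdot\frac{A_{k+1}}{\lambda_{k+1}}\|y_{k+1}-\tilde x_k\|^2 .
\]
Taking $x=x^*$, summing over $k=0,\dots,K-1$, using $\Phi_K\ge 0$ and $\Phi_0=\tfrac12\|x^*\|^2$, and reindexing $i=k+1$ yields \eqref{eq:Alambdatradeoff}. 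The only genuine obstacle is executing the algebra of the second paragraph without error; everything else is routine.
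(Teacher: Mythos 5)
Your proof is correct and is essentially the paper's argument in a different packaging: your one-step decrease of $\Phi_k = A_k(f(y_k)-f(x)) + \tfrac12\|x_k-x\|^2$ reproduces term-for-term the estimate-sequence computation of Lemmas \ref{lem:basic1}--\ref{lem:basic4} (the same two convexity inequalities at $y_{k+1}$, the same identity $A_{k+1}\tilde x_k = A_k y_k + a_{k+1}x_k$, and the same squaring of \eqref{eq:igd}/\eqref{eq:igdrefined}), and your telescoping bound $\sqrt{A_k}\ge\tfrac12\sum_i\sqrt{\lambda_i}$ is Lemma \ref{lem:Aestimate}. No gaps.
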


To illustrate the power of Theorem \ref{thm:MS}, observe that for a $L_1$-smooth function (first-order smoothness) one has that Nesterov's accelerated gradient descent \eqref{eq:agd} directly satisfies \eqref{eq:igd} provided that $\lambda_{k+1}= \frac{1}{L_1}$ (i.e., the classical step-size for smooth convex optimization). Using \eqref{eq:rate} this immediately shows that \eqref{eq:agd} has a rate of convergence of $O(1/k^2)$

The key to higher-order acceleration will be to show that in fact one can take $\lambda_k$ to be an increasing function of $A_k$, thanks 
 to a careful use of \eqref{eq:Alambdatradeoff}. This will be done in Section~\ref{sec:ATD}.

We now embark on the road leading to Theorem \ref{thm:MS}.

\subsection{Estimate sequence style analysis}
Similarly to the original construction by Nemirovski \cite{NY83, Nem82} (and taking inspiration from the conjugate gradient method) the starting point is to consider a linear combination of past gradients: $x_k := - \sum_{i=1}^k a_i \nabla f(y_i)$, where both the coefficients $(a_i)$ and the query points $(y_i)$ are yet to be defined. In the spirit of Nesterov's estimate sequence analysis, a key observation for such linear combination of gradients is that it minimizes an approximate lower bound on $f$:
\begin{lemma} \label{lem:basic1}
Let $\psi_0(x) = \frac{1}{2} \|x\|^2$ and define by induction $\psi_{k}(x) = \psi_{k-1}(x) + a_{k} f_1(x, y_{k})$. Then $x_k = - \sum_{i=1}^k a_i \nabla f(y_i)$ is the minimizer of $\psi_k$, and 
$\psi_k(x) \leq A_k f(x) + \frac{1}{2} \|x\|^2$ where $A_k = \sum_{i=1}^k a_i$. 
\end{lemma}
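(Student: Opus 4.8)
The plan is to prove both claims by unfolding the recursion defining $\psi_k$ into an explicit closed form and then reading off the minimizer and the inequality. First I would expand: since $f_1(x,y_i) = f(y_i) + \langle \nabla f(y_i), x - y_i\rangle$ is affine in $x$, an easy induction gives
\[
\psi_k(x) = \frac{1}{2}\|x\|^2 + \sum_{i=1}^k a_i\big(f(y_i) + \langle \nabla f(y_i), x - y_i\rangle\big).
\]
This is a strictly convex quadratic in $x$ whose Hessian is the identity, so it has a unique minimizer, characterized by the first-order condition $\nabla \psi_k(x) = 0$. Differentiating term by term, $\nabla \psi_k(x) = x + \sum_{i=1}^k a_i \nabla f(y_i)$, so the unique critical point is $x_k = -\sum_{i=1}^k a_i \nabla f(y_i)$, which is exactly the claimed linear combination of past gradients. (Equivalently one can run the induction directly: passing from $\psi_{k-1}$ to $\psi_k = \psi_{k-1} + a_k f_1(\cdot,y_k)$ only adds a linear term $a_k \langle \nabla f(y_k), \cdot\rangle$ to the quadratic, shifting its minimizer by $-a_k \nabla f(y_k)$.)

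For the upper bound, the only fact used is the first-order characterization of convexity: since $f$ is convex, for every $i$ and every $x$,
\[
f_1(x,y_i) = f(y_i) + \langle \nabla f(y_i), x - y_i\rangle \le f(x).
\]
Because each $a_i \ge 0$ (as guaranteed by the construction of the coefficients in the framework), multiplying this by $a_i$ and summing over $i$ yields $\sum_{i=1}^k a_i f_1(x,y_i) \le \big(\sum_{i=1}^k a_i\big) f(x) = A_k f(x)$. Adding $\psi_0(x) = \tfrac{1}{2}\|x\|^2$ to both sides and comparing with the explicit formula above gives $\psi_k(x) \le A_k f(x) + \tfrac{1}{2}\|x\|^2$, as desired.

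I do not expect a genuine obstacle here: this is the standard estimate-sequence bookkeeping lemma, and both parts are immediate once $\psi_k$ is written out explicitly — the minimizer from first-order optimality of a strongly convex quadratic, the bound from the convexity (first-order lower bound) of $f$. The only points worth stating carefully are that the $\tfrac{1}{2}\|x\|^2$ term keeps $\psi_k$ strongly convex, so its minimizer is unique and well defined, and that nonnegativity of the weights $a_i$ is what preserves the inequality under summation; both are built into the setup.
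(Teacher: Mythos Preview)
Your argument is correct and complete: the explicit expansion of $\psi_k$, the first-order optimality condition giving $x_k$, and the convexity inequality $f_1(x,y_i)\le f(x)$ summed with nonnegative weights are exactly the right ingredients. The paper in fact states this lemma without proof, treating it as routine estimate-sequence bookkeeping, so there is nothing to compare against beyond noting that your write-up fills in precisely the details the authors left implicit.
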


The next idea is to produce a "control sequence'' $(z_k)_{k\geq1}$ demonstrating that $\psi_k$ is not too far below $A_k f$, which in turn would directly yield a convergence rate for $z_k$ of order $1/A_k$:

\begin{lemma} \label{lem:basic2}
Let $(z_k)$ be a sequence such that 
\begin{equation} \label{eq:tosatisfy}
\psi_k(x_k) - A_k f(z_k) \geq 0 \,.
\end{equation}
Then one has for any $x$,
\begin{equation} \label{eq:tosatisfy2}
f(z_k) \leq f(x) + \frac{\|x\|^2}{2 A_k} \,.
\end{equation}
\end{lemma}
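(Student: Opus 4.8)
The plan is to chain together the two properties of $\psi_k$ established in Lemma~\ref{lem:basic1} with the defining inequality \eqref{eq:tosatisfy} for $z_k$. First I would use the fact that $x_k$ is the \emph{minimizer} of $\psi_k$: the function $\psi_k$ is convex (a fixed quadratic plus a sum of affine functions), and Lemma~\ref{lem:basic1} identifies $x_k = -\sum_{i=1}^k a_i \nabla f(y_i)$ as its minimizer, so $\psi_k(x_k) \le \psi_k(x)$ for every $x \in \R^d$. Combining this with hypothesis \eqref{eq:tosatisfy}, which reads $\psi_k(x_k) \ge A_k f(z_k)$, gives
\[
A_k f(z_k) \;\le\; \psi_k(x_k) \;\le\; \psi_k(x) \qquad \text{for all } x \in \R^d.
\]

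Next I would invoke the upper bound from Lemma~\ref{lem:basic1}, namely $\psi_k(x) \le A_k f(x) + \tfrac12\|x\|^2$, to obtain $A_k f(z_k) \le A_k f(x) + \tfrac12\|x\|^2$ for all $x$. Dividing through by $A_k$ then yields exactly \eqref{eq:tosatisfy2}. The division is legitimate because $A_k = \sum_{i=1}^k a_i > 0$ (the $a_i$ are positive step sizes, so the statement is only vacuous in the degenerate case $k=0$).

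There is essentially no obstacle here: all the real work is packaged into Lemma~\ref{lem:basic1}, and this lemma is just the one-line bookkeeping step that converts the ``potential'' inequality \eqref{eq:tosatisfy} into a genuine convergence rate of order $1/A_k$. The only points requiring a moment's care are that \eqref{eq:tosatisfy} is used in the direction that \emph{lower}-bounds $\psi_k(x_k)$, and that the minimality of $x_k$ lets us replace $\psi_k(x_k)$ by $\psi_k(x)$ for the arbitrary comparison point $x$ (in particular one later takes $x = x^\ast$).
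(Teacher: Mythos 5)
Your proof is correct and is exactly the paper's argument: the chain $A_k f(z_k) \le \psi_k(x_k) \le \psi_k(x) \le A_k f(x) + \tfrac12\|x\|^2$ from \eqref{eq:tosatisfy}, the minimality of $x_k$, and the upper bound in Lemma~\ref{lem:basic1}, followed by division by $A_k > 0$. No differences worth noting.
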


\begin{proof}
One has (recall Lemma \ref{lem:basic1}):
\[
A_k f(z_k) \leq \psi_k(x_k) \leq \psi_k(x) \leq A_k f(x) + \frac{1}{2}\|x\|^2 \,.
\]
\end{proof}

\subsection{A proof by induction}
Our goal is now to come up with sequences $(a_k, y_k, z_k)$ satisfying \eqref{eq:tosatisfy}. The following lemma, resulting from 
elementary calculations, reveals a simple condition to obtain \eqref{eq:tosatisfy} from an induction argument:
\begin{lemma} \label{lem:basic3}
One has for any $x$,
\begin{align*}
& \psi_{k+1}(x) - A_{k+1} f(y_{k+1}) - (\psi_k(x_k) - A_k f(z_k)) \\
& \geq A_{k+1} \nabla f(y_{k+1}) \cdot \left(\frac{a_{k+1}}{A_{k+1}} x + \frac{A_k}{A_{k+1}} z_k - y_{k+1} \right ) + \frac{1}{2} \|x -x_k\|^2 \,.
\end{align*}
\end{lemma}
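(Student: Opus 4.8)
The plan is to expand both sides using the definitions of $\psi_k$, $A_k$, $a_k$, $z_k$, $\tilde x_k$, and the convexity of $f$, and check the inequality term by term. First I would write $\psi_{k+1}(x) = \psi_k(x) + a_{k+1} f_1(x, y_{k+1})$, so that
\[
\psi_{k+1}(x) - (\psi_k(x_k) - A_k f(z_k)) = \big(\psi_k(x) - \psi_k(x_k)\big) + a_{k+1} f_1(x, y_{k+1}) + A_k f(z_k).
\]
Since $\psi_k$ is a quadratic with leading term $\tfrac12\|\cdot\|^2$ and minimizer $x_k$ (Lemma \ref{lem:basic1}), the first bracket is exactly $\tfrac12\|x - x_k\|^2$, which already accounts for the last term on the right-hand side of the claimed inequality. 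So it remains to show
\[
a_{k+1} f_1(x, y_{k+1}) + A_k f(z_k) - A_{k+1} f(y_{k+1}) \;\geq\; A_{k+1}\nabla f(y_{k+1})\cdot\Big(\tfrac{a_{k+1}}{A_{k+1}} x + \tfrac{A_k}{A_{k+1}} z_k - y_{k+1}\Big).
\]

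Next I would unpack $f_1(x, y_{k+1}) = f(y_{k+1}) + \nabla f(y_{k+1})\cdot(x - y_{k+1})$ on the left and use $A_{k+1} = A_k + a_{k+1}$ to cancel the $f(y_{k+1})$ terms: the left side becomes
\[
A_k\big(f(z_k) - f(y_{k+1})\big) + a_{k+1}\nabla f(y_{k+1})\cdot(x - y_{k+1}).
\]
On the right side, $A_{k+1}\nabla f(y_{k+1})\cdot(\cdots) = a_{k+1}\nabla f(y_{k+1})\cdot(x - y_{k+1}) + A_k\nabla f(y_{k+1})\cdot(z_k - y_{k+1})$, so the $a_{k+1}$ terms cancel and the whole claim reduces to
\[
A_k\big(f(z_k) - f(y_{k+1})\big) \;\geq\; A_k\,\nabla f(y_{k+1})\cdot(z_k - y_{k+1}),
\]
i.e. $f(z_k) \geq f(y_{k+1}) + \nabla f(y_{k+1})\cdot(z_k - y_{k+1})$, which is just the gradient inequality for the convex function $f$ (and holds trivially when $A_k = 0$).

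I do not expect a serious obstacle here: the lemma is explicitly flagged as "resulting from elementary calculations," and the only non-algebraic ingredient is convexity of $f$. The one place to be careful is bookkeeping with the coefficients — making sure the $a_{k+1}$-weighted linear term in $x$ on the two sides really does cancel exactly, and that the split $A_{k+1} = A_k + a_{k+1}$ is applied consistently — but this is routine. A secondary check worth doing is the $k=0$ base case, where $A_0 = 0$, $x_0 = 0$, and $\psi_0(x) = \tfrac12\|x\|^2$, to confirm the quadratic identity $\psi_k(x) - \psi_k(x_k) = \tfrac12\|x - x_k\|^2$ is used correctly at the first step.
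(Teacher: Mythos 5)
Your proposal is correct and follows essentially the same route as the paper's proof: both reduce the claim to the quadratic identity $\psi_k(x)-\psi_k(x_k)=\tfrac12\|x-x_k\|^2$ plus the single convexity inequality $f_1(z_k,y_{k+1})\leq f(z_k)$, differing only in bookkeeping (you expand $f_1$ directly and cancel, while the paper regroups the terms as $f_1$ evaluated at a convex combination). No gaps.
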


\begin{proof}
First we note that (the first equality follows from the fact that the Hessian of $\psi_k$ remains the identity for any $k$):
\[
\psi_k(x) = \psi_k(x_k) + \frac{1}{2} \|x- x_k\|^2, \text{ and } \psi_{k+1}(x) = \psi_k(x_k) + \frac{1}{2} \|x-x_k\|^2 + a_{k+1} f_1(x, y_{k+1}) \,,
\]
so that 
\begin{equation} \label{eq:ind1}
\psi_{k+1}(x) - \psi_k(x_k) = a_{k+1} f_1(x, y_{k+1}) + \frac{1}{2} \|x-x_k\|^2 \,.
\end{equation}
Now we want to make appear the term $A_{k+1} f(z_{k+1}) - A_k f(z_k)$ as a lower bound on the right hand side of \eqref{eq:ind1} when evaluated at $x=x_{k+1}$. 
Using the inequality 
 $f_1(z_k, y_{k+1}) \leq f(z_k)$
 we have:
\begin{eqnarray*}
a_{k+1} f_1(x, y_{k+1}) & = & A_{k+1} f_1(x, y_{k+1}) - A_k f_1(x, y_{k+1}) \\
& = & A_{k+1} f_1(x, y_{k+1}) - A_k \nabla f(y_{k+1}) \cdot (x - z_k) - A_k f_1(z_k, y_{k+1}) \\
 & = & A_{k+1} f_1\left(x - \frac{A_k}{A_{k+1}} (x - z_k), y_{k+1} \right ) - A_k f_1(z_k, y_{k+1}) \\
 & \geq & A_{k+1} f(y_{k+1}) - A_k f(z_k) + A_{k+1} \nabla f(y_{k+1}) \cdot \left(\frac{a_{k+1}}{A_{k+1}} x + \frac{A_k}{A_{k+1}} z_k - y_{k+1} \right ) \,,
\end{eqnarray*}
which concludes the proof.
\end{proof}

From Lemma \ref{lem:basic3} we see that it is natural to take for the control sequence $z_k := y_k$, so that:
\begin{align}
& \psi_{k+1}(x) - A_{k+1} f(y_{k+1}) - (\psi_k(x_k) - A_k f_k(y_k)) \\
& \geq  A_{k+1} \nabla f(y_{k+1}) \cdot \left(\frac{a_{k+1}}{A_{k+1}} x + \frac{A_k}{A_{k+1}} y_k - y_{k+1} \right )  + \frac{1}{2} \|x - x_k\|^2 \,. \label{eq:rhs}
\end{align} 
We would like to pick the query point $y_{k+1}$ so that \eqref{eq:rhs} is nonnegative when evaluated at $x= x_{k+1}$ (to satisfy \eqref{eq:tosatisfy}). One difficulty is that $x_{k+1}$ itself depends on $y_{k+1}$, so in fact we will pick $y_{k+1}$ so that the right side is nonnegative {\em for all $x$}. We write this as follows:

\begin{lemma} \label{lem:basic4}
Denoting $\lambda_{k+1} := \frac{a_{k+1}^2}{A_{k+1}}$ and $\tilde{x}_k := \frac{a_{k+1}}{A_{k+1}} x_{k} + \frac{A_k}{A_{k+1}} y_k$ one has:
\begin{align*}
& \psi_{k+1}(x_{k+1}) - A_{k+1} f(y_{k+1}) - (\psi_k(x_k) - A_k f(y_k)) \\
& \geq \frac{A_{k+1}}{2 \lambda_{k+1}} \bigg( \|y_{k+1} - \tilde{x}_k\|^2 - \|y_{k+1} - (\tilde{x}_k - \lambda_{k+1} \nabla f(y_{k+1})) \|^2 \bigg) \,.
\end{align*}
In particular, we have in light of \eqref{eq:igdrefined}
$$\psi_{k}(x_{k})-A_{k}f(y_{k})\geq\frac{1-\sigma^{2}}{2}\sum_{i=1}^{k}\frac{A_{i}}{\lambda_{i}}\|y_{i}-\tilde{x}_{i-1}\|^{2}.$$
\end{lemma}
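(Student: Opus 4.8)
The plan is to start from the inequality established in \eqref{eq:rhs} (the $z_k := y_k$ specialization of Lemma \ref{lem:basic3}) and evaluate the right-hand side at $x = x_{k+1}$. Recalling that $x_{k+1} = x_k - a_{k+1}\nabla f(y_{k+1})$ and $\tilde{x}_k = \frac{a_{k+1}}{A_{k+1}} x_k + \frac{A_k}{A_{k+1}} y_k$, the linear term $\frac{a_{k+1}}{A_{k+1}} x_{k+1} + \frac{A_k}{A_{k+1}} y_k$ becomes $\tilde{x}_k - \frac{a_{k+1}^2}{A_{k+1}} \nabla f(y_{k+1}) = \tilde{x}_k - \lambda_{k+1}\nabla f(y_{k+1})$, using the defining relation $\lambda_{k+1} = a_{k+1}^2 / A_{k+1}$. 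Likewise $\|x_{k+1} - x_k\|^2 = a_{k+1}^2 \|\nabla f(y_{k+1})\|^2$. So the right-hand side of \eqref{eq:rhs} at $x = x_{k+1}$ equals
\[
A_{k+1}\, \nabla f(y_{k+1}) \cdot \big(\tilde{x}_k - \lambda_{k+1}\nabla f(y_{k+1}) - y_{k+1}\big) + \tfrac{1}{2} a_{k+1}^2 \|\nabla f(y_{k+1})\|^2 \,.
\]

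Next I would recognize this expression as (a multiple of) a completed square. Factoring out $\frac{A_{k+1}}{2\lambda_{k+1}}$ and writing $g := \nabla f(y_{k+1})$, $u := \tilde{x}_k - y_{k+1}$, the bracket becomes $2\lambda_{k+1}\, g\cdot u - 2\lambda_{k+1}^2 \|g\|^2 + \lambda_{k+1}^2 \|g\|^2 = 2\lambda_{k+1}\, g\cdot u - \lambda_{k+1}^2 \|g\|^2$ (here I used $a_{k+1}^2 = \lambda_{k+1} A_{k+1}$, so $\frac12 a_{k+1}^2 = \frac{A_{k+1}}{2\lambda_{k+1}} \cdot \lambda_{k+1}^2$). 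This is exactly $\|u\|^2 - \|u - \lambda_{k+1} g\|^2 = \|\tilde{x}_k - y_{k+1}\|^2 - \|\tilde{x}_k - \lambda_{k+1}\nabla f(y_{k+1}) - y_{k+1}\|^2$, which is the claimed inequality. This first part is therefore a purely algebraic identity; the only mild care needed is bookkeeping the substitutions $\lambda_{k+1} = a_{k+1}^2/A_{k+1}$ and keeping track of which terms are squared.

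For the ``in particular'' part, I would telescope. The refined hypothesis \eqref{eq:igdrefined} gives $\|y_{k+1} - (\tilde{x}_k - \lambda_{k+1}\nabla f(y_{k+1}))\|^2 \leq \sigma^2 \|y_{k+1} - \tilde{x}_k\|^2$, so the lower bound in the first part is at least $\frac{A_{k+1}}{2\lambda_{k+1}}(1-\sigma^2)\|y_{k+1}-\tilde{x}_k\|^2$. Thus the quantity $\psi_k(x_k) - A_k f(y_k)$ increases by at least that amount at each step. Since $\psi_0(x_0) - A_0 f(y_0) = 0$ (as $A_0 = 0$ and $\psi_0(x_0) = \psi_0(0) = 0$), summing the per-step increments from $i = 0$ to $k-1$ gives $\psi_k(x_k) - A_k f(y_k) \geq \frac{1-\sigma^2}{2}\sum_{i=1}^k \frac{A_i}{\lambda_i}\|y_i - \tilde{x}_{i-1}\|^2$, with the index shift matching the statement.

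I do not anticipate a genuine obstacle here; the content is entirely in the first identity, and the risk is purely clerical — mismatching the roles of $a_{k+1}$, $\lambda_{k+1}$, $A_{k+1}$ in the completion of the square, or mis-stating which point the momentum term collapses to after plugging in $x = x_{k+1}$. The conceptual insight — that evaluating at $x = x_{k+1}$ turns the affine-plus-quadratic expression into a difference of two squared distances measuring how close $y_{k+1}$ is to the implicit gradient step $\tilde{x}_k - \lambda_{k+1}\nabla f(y_{k+1})$ — is exactly what motivates the algorithm's line-search condition, and it is the one step worth double-checking carefully.
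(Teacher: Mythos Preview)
Your proof is correct and follows essentially the same route as the paper. The only cosmetic difference is that, after plugging in $x=x_{k+1}$, the paper rewrites the right-hand side of \eqref{eq:rhs} via the change of variable $\tilde{x}=\frac{a_{k+1}}{A_{k+1}}x+\frac{A_k}{A_{k+1}}y_k$ and presents the resulting expression as $A_{k+1}\cdot\min_{x}\{\nabla f(y_{k+1})\cdot(x-y_{k+1})+\tfrac{1}{2\lambda_{k+1}}\|x-\tilde{x}_k\|^2\}$, then computes that minimum; you instead complete the square directly. Since the minimizer of that quadratic is exactly $\tilde{x}_k-\lambda_{k+1}\nabla f(y_{k+1})$, which corresponds precisely to $x=x_{k+1}$, the two computations coincide and your direct route is arguably cleaner. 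Your telescoping argument for the ``in particular'' part, with the base case $\psi_0(x_0)-A_0 f(y_0)=0$, is exactly what the paper uses implicitly.
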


\begin{proof}
We apply Lemma \ref{lem:basic3} with $z_k = y_k$ and $x=x_{k+1}$, and 
note that (with $\tilde{x} := \frac{a_{k+1}}{A_{k+1}} x + \frac{A_k}{A_{k+1}} y_k$): 
\begin{align*}
& \nabla f(y_{k+1}) \cdot \left(\frac{a_{k+1}}{A_{k+1}} x + \frac{A_k}{A_{k+1}} y_k - y_{k+1} \right )  + \frac{1}{2 A_{k+1}} \|x - x_k\|^2 \\
& = \nabla f(y_{k+1}) \cdot (\tilde{x} - y_{k+1}) + \frac{1}{2 A_{k+1}} \left\|\frac{A_{k+1}}{a_{k+1}} \left(\tilde{x} - \frac{A_k}{A_{k+1}} y_k \right) - x_k \right\|^2 \\
& = \nabla f(y_{k+1}) \cdot (\tilde{x} - y_{k+1}) + \frac{A_{k+1}}{2 a_{k+1}^2} \left\|\tilde{x} - \left(\frac{a_{k+1}}{A_k} x_k + \frac{A_k}{A_{k+1}} y_k \right) \right\|^2 \,.
\end{align*}
This yields:
\begin{align*}
& \psi_{k+1}(x_{k+1}) - A_{k+1} f(y_{k+1}) - (\psi_k(x_k) - A_k f(y_k)) \\
& \geq A_{k+1} \cdot \min_{x \in \R^d} \left\{ \nabla f(y_{k+1}) \cdot (x - y_{k+1}) + \frac{1}{2 \lambda_{k+1}} \|x - \tilde{x}_k\|^2 \right\} \,.
\end{align*}
It only remains to compute the value of this minimum, which is an easy exercise.
\end{proof}

\subsection{Proof of Theorem \ref{thm:MS}}
For the first conclusion in Theorem \ref{thm:MS}, it suffices to combine Lemma \ref{lem:basic4} with Lemma \ref{lem:basic2}, and to use the following observation:
\begin{lemma} \label{lem:Aestimate}
Let $(\lambda_k)$ be a sequence of nonnegative numbers. Define $(a_k)$ to be another sequence of nonnegative numbers such that $\lambda_k A_k = a_k^2$, where $A_k = \sum_{i =1}^k a_i$. In other words one has $a_k = \frac{\lambda_k + \sqrt{\lambda_k^2 + 4 \lambda_k A_{k-1}}}{2}$. Furthermore one also has:
\[
\sqrt{A_k} \geq \frac{1}{2} \sum_{i=1}^k \sqrt{\lambda_i} \,. 
\]
\end{lemma}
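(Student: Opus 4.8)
The plan is to first dispatch the ``in other words'' claim and then prove the main inequality by a telescoping estimate on $\sqrt{A_k}$. For the closed form: the defining relation $\lambda_k A_k = a_k^2$ combined with $A_k = A_{k-1} + a_k$ gives the quadratic $a_k^2 - \lambda_k a_k - \lambda_k A_{k-1} = 0$; its unique nonnegative root is $a_k = \frac{\lambda_k + \sqrt{\lambda_k^2 + 4 \lambda_k A_{k-1}}}{2}$. Since $A_0 = 0$, this recursively defines a nonnegative sequence $(a_k)$ (hence $(A_k)$) from $(\lambda_k)$, so everything is well posed.

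For the main bound, the key observation is that $\lambda_k A_k = a_k^2$ means $\sqrt{A_k} = a_k / \sqrt{\lambda_k}$ whenever $\lambda_k > 0$. Then I would write the telescoping difference
\[
\sqrt{A_k} - \sqrt{A_{k-1}} = \frac{A_k - A_{k-1}}{\sqrt{A_k} + \sqrt{A_{k-1}}} = \frac{a_k}{\sqrt{A_k} + \sqrt{A_{k-1}}} \geq \frac{a_k}{2 \sqrt{A_k}} = \frac{\sqrt{\lambda_k}}{2} \,,
\]
using $A_{k-1} \leq A_k$ for the middle inequality. (If $\lambda_k = 0$ then $a_k = 0$ and the inequality $\sqrt{A_k} - \sqrt{A_{k-1}} \geq 0 = \tfrac12\sqrt{\lambda_k}$ is trivial, so this case needs no division.) Summing over $i = 1, \dots, k$ and using $A_0 = 0$ telescopes the left side to $\sqrt{A_k}$, yielding $\sqrt{A_k} \geq \frac{1}{2} \sum_{i=1}^k \sqrt{\lambda_i}$.

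There is no real obstacle here; the whole argument is the one-line telescoping estimate above, and the only point requiring a moment's care is the degenerate case $\lambda_k = 0$ (and the possibility $A_k = 0$ for small $k$), which is handled separately so that the division by $\sqrt{A_k}$ is always legitimate.
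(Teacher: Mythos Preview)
Your proof is correct and follows essentially the same approach as the paper: both establish the one-step telescoping inequality $\sqrt{A_k} - \sqrt{A_{k-1}} \geq \tfrac12\sqrt{\lambda_k}$ and then sum. The only cosmetic difference is that the paper derives this increment bound by manipulating the explicit quadratic-formula expression for $a_k$, whereas you use the conjugate identity $\sqrt{A_k}-\sqrt{A_{k-1}} = a_k/(\sqrt{A_k}+\sqrt{A_{k-1}})$ together with $a_k = \sqrt{\lambda_k A_k}$, which is a slightly cleaner route to the same inequality.
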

\begin{proof}
It suffices to observe that:
\[
a_k = \frac{\lambda_k + \sqrt{\lambda_k^2 + 4 \lambda_k A_{k-1}}}{2} \geq \frac{\lambda_k}{2} + \sqrt{\lambda_k A_{k-1}} \geq \left(\frac{\sqrt{\lambda_k}}{2} + \sqrt{A_{k-1}} \right)^2 - A_{k-1} \,.
\]
\end{proof}
The second conclusion in Theorem \ref{thm:MS} follows from Lemma \ref{lem:basic4} and Lemma \ref{lem:basic1}.

\section{Accelerated Taylor Descent} \label{sec:ATD}
Nesterov's accelerated gradient descent \eqref{eq:agd} (with $\lambda_{k} = 1/L_1$) can be rewritten as:
\[
y_{k+1} = \argmin_{y \in \R^d} f_1(y, \tilde{x}_k) + \frac{L_1}{2} \|y - \tilde{x}_k\|^2 \,.
\]
We naturally propose to use the following generalization for higher-order smoothness, which we term {\em accelerated Taylor descent (ATD)}:
\begin{equation} \label{eq:atddef}
y_{k+1} = \argmin_{y \in \R^d} f_p(y, \tilde{x}_k) + \frac{L_p}{p!} \|y - \tilde{x}_k\|^{p+1} \,.
\end{equation}
The term $\| \cdot \|^{p+1}$  is added to ensure that the function being optimized is strictly convex.
In 
Section~\ref{sec:atdigd} we first show that ATD satisfies \eqref{eq:igd} for a special value of $\lambda_{k+1}$ defined in terms of $y_{k+1}$. We point out that there is an intricate issue here, in the sense that $y_{k+1}$ depends on $\lambda_{k+1}$ (through the definition of $\tilde{x}_k$), and thus we will have to select the the pair $(y_{k+1}, \lambda_{k+1})$ simultaneously rather than sequentially. This is detailed in 
Section~\ref{sec:continuity}. Finally in 
Section~\ref{sec:proofmain} 
we use \eqref{eq:Alambdatradeoff} with the special values of $(\lambda_i)$ to derive the rate of convergence from Theorem \ref{thm:main}.

\subsection{ATD and implicit gradient descent with large step size} \label{sec:atdigd}
The following lemma shows that minimizing the $p^{th}$ order Taylor expansion \eqref{eq:atddef} can be viewed as an implicit gradient step for some ``large'' step size:
\begin{lemma} \label{lem:controlstepsize}
Equation \eqref{eq:igdrefined} holds true with $\sigma = 1/2$ for \eqref{eq:atddef}, provided that one has: 
\begin{equation} \label{eq:key4}
\frac{1}{2} \leq \lambda_{k+1} \frac{L_p \cdot \|y_{k+1} - \tilde{x}_k\|^{p-1}}{(p-1)!}  \leq \frac{p}{p+1} \,.
\end{equation}
\end{lemma}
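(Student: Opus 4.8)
The plan is to read off the first-order optimality condition for the strictly convex subproblem defining $y_{k+1}$ and convert it into a comparison between $\nabla f(y_{k+1})$ and a rescaled copy of the displacement $r := y_{k+1} - \tilde{x}_k$. Writing $\phi(y) = f_p(y,\tilde{x}_k) + \frac{L_p}{p!}\|y - \tilde{x}_k\|^{p+1}$, stationarity of $y_{k+1}$ gives $\nabla_y f_p(y_{k+1},\tilde{x}_k) = -\frac{(p+1)L_p}{p!}\|r\|^{p-1} r$. The key observation is that $\nabla_y f_p(\cdot,\tilde{x}_k)$ is exactly the $(p-1)$-st order Taylor expansion of $\nabla f$ around $\tilde{x}_k$; since $\nabla^p f$ is $L_p$-Lipschitz, the standard Taylor remainder bound yields $\|\nabla f(y_{k+1}) - \nabla_y f_p(y_{k+1},\tilde{x}_k)\| \le \frac{L_p}{p!}\|r\|^{p}$.

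Next I would substitute these two facts into the quantity to be bounded. Using $\nabla f(y_{k+1}) = \big(\nabla f(y_{k+1}) - \nabla_y f_p(y_{k+1},\tilde{x}_k)\big) - \frac{(p+1)L_p}{p!}\|r\|^{p-1} r$ and the shorthand $\mu := \lambda_{k+1}\frac{(p+1)L_p}{p!}\|r\|^{p-1}$, one obtains
\[
y_{k+1} - \big(\tilde{x}_k - \lambda_{k+1}\nabla f(y_{k+1})\big) = (1-\mu)\, r + \lambda_{k+1}\big(\nabla f(y_{k+1}) - \nabla_y f_p(y_{k+1},\tilde{x}_k)\big) \,.
\]
By the triangle inequality, the remainder bound, and $\lambda_{k+1}\frac{L_p}{p!}\|r\|^{p} = \frac{\mu}{p+1}\|r\|$, the norm of the right-hand side is at most $(1-\mu)\|r\| + \frac{\mu}{p+1}\|r\| = \big(1 - \frac{p}{p+1}\mu\big)\|r\|$, where the step keeping the first term nonnegative uses $\mu \le 1$.

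Finally I would translate the hypothesis \eqref{eq:key4}. Since $\frac{(p+1)L_p}{p!} = \frac{p+1}{p}\cdot\frac{L_p}{(p-1)!}$, condition \eqref{eq:key4} says precisely $\frac{p+1}{2p} \le \mu \le 1$, so that $\frac{p}{p+1}\mu \ge \frac12$ and the contraction factor $1 - \frac{p}{p+1}\mu$ is at most $\frac12$. This is exactly \eqref{eq:igdrefined} with $\sigma = 1/2$.

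I expect the only real subtlety to be the identification of $\nabla_y f_p(\cdot,\tilde{x}_k)$ with the truncated Taylor series of $\nabla f$, together with pinning down the remainder constant $\frac{L_p}{p!}$ — this is where the $p!$ versus $(p-1)!$ discrepancy in \eqref{eq:key4} originates; the rest is bookkeeping with the triangle inequality. I would also remark in passing that strict convexity of $\phi$, guaranteed by the $\|\cdot\|^{p+1}$ term, makes $y_{k+1}$ well defined, while the existence of a pair $(\lambda_{k+1},y_{k+1})$ simultaneously satisfying \eqref{eq:key4} and the definition of $y_{k+1}$ is a separate matter handled in Section~\ref{sec:continuity}.
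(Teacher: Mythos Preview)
Your proof is correct and follows essentially the same route as the paper: both use the optimality condition for $y_{k+1}$ to express $r=y_{k+1}-\tilde{x}_k$ in terms of $\nabla_y f_p(y_{k+1},\tilde{x}_k)$, invoke the Taylor remainder bound $\|\nabla f(y_{k+1})-\nabla_y f_p(y_{k+1},\tilde{x}_k)\|\le \frac{L_p}{p!}\|r\|^p$, and then reduce to a one-variable inequality under \eqref{eq:key4}. The only cosmetic difference is the choice of parameter --- the paper uses $\eta=\lambda_{k+1}\frac{L_p}{(p-1)!}\|r\|^{p-1}$ and ends with the factor $\frac{\eta}{p}+|1-\tfrac{p+1}{p}\eta|=1-\eta$, which is exactly your $1-\tfrac{p}{p+1}\mu$ since $\mu=\tfrac{p+1}{p}\eta$.
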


\begin{proof}
Observe that the optimality condition gives:
\begin{equation} \label{eq:KKT_TD}
\nabla_y f_p(y_{k+1}, \tilde{x}_k) + \frac{L_p \cdot (p+1)}{p!} (y_{k+1} - \tilde{x}_k) \|y_{k+1} - \tilde{x}_k\|^{p-1} = 0 \,. 
\end{equation}
In particular we get:
\[
y_{k+1} - (\tilde{x}_k - \lambda_{k+1} \nabla f(y_{k+1}))  = \lambda_{k+1} \nabla f(y_{k+1}) - \frac{p!}{L_p \cdot (p+1) \cdot \|y_{k+1} - \tilde{x}_k\|^{p-1}} \nabla_y f_p(y_{k+1}, \tilde{x}_k) \,.
\]
By doing a Taylor expansion of the gradient function one obtains:
\[
\|\nabla f(y) - \nabla_y f_p(y, x)\| \leq \frac{L_p}{p!} \|y - x\|^p \,,
\]
so that we find:
\begin{align*}
& \|y_{k+1} - (\tilde{x}_k - \lambda_{k+1} \nabla f(y_{k+1})) \| \\
& \leq \lambda_{k+1} \frac{L_p}{p!} \|y_{k+1} - \tilde{x}_k\|^p + \left|\lambda_{k+1} - \frac{p!}{L_p \cdot (p+1) \cdot \|y_{k+1} - \tilde{x}_k\|^{p-1}} \right| \cdot \|\nabla_y f_p(y_{k+1}, \tilde{x}_k)\| \\
& \leq \|y_{k+1} - \tilde{x}_k\| \left(\lambda_{k+1} \frac{L_p}{p!} \|y_{k+1} - \tilde{x}_k\|^{p-1} + \left|\lambda_{k+1}\frac{L_p \cdot (p+1) \cdot \|y_{k+1} - \tilde{x}_k\|^{p-1}}{p!} - 1\right|  \right) \\
&=\|y_{k+1}-\tilde{x}_{k}\|\left(\frac{\eta}{p}+\left|\eta\cdot\frac{p+1}{p}-1\right|\right)
\end{align*}
where we used \eqref{eq:KKT_TD} in the second last equation and we let $\eta := \lambda_{k+1} \frac{L_p \cdot \|y_{k+1} - \tilde{x}_k\|^{p-1}}{(p-1)!}$ in the last equation.  The result follows from the assumption $1/2 \leq \eta \leq p/(p+1)$ in \eqref{eq:key4}. 
\end{proof}

\subsection{A continuity argument} \label{sec:continuity}
We now claim that there exists a pair $(y_{k+1}, \lambda_{k+1})$ that satisfies simultaneously \eqref{eq:atddef} and \eqref{eq:key4}. This is a direct consequence of the following lemma.
\begin{lemma}
Let $A \geq 0$, $x, y \in \R^d$ such that $f(x) \neq f(x^*)$. Define the following functions:
\begin{align*}
& a(\lambda) = \frac{\lambda + \sqrt{\lambda^2 + 4 \lambda A}}{2} \,, \, x(\lambda) = \frac{a(\lambda)}{A + a(\lambda)} x + \frac{A}{A+a(\lambda)} y \,, \\
& y(z) = \argmin_{w \in \R^d} \left\{ f_p(w, z) + \frac{L_p}{p!} \|w - z\|^{p+1} \right\} \,, \, g(\lambda) = \lambda \|y(x(\lambda)) - x(\lambda)\|^{p-1} \,.
\end{align*}
Then we have $g(\R_+) = \R_+$.
\end{lemma}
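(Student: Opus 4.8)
The plan is an elementary connectedness/intermediate-value argument: once $g$ is known to be continuous on $\R_+$, its image is an interval, so it suffices to show that this interval contains $0$ (or values arbitrarily close to $0$) and is unbounded above, which forces $g(\R_+)=\R_+$.

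First I would check that $g$ is continuous. The map $\lambda\mapsto a(\lambda)$ is manifestly continuous and nonnegative, with $a(0)=0$ and $a(\lambda)\geq\lambda$ since $\sqrt{\lambda^2+4\lambda A}\geq\lambda$; hence $A+a(\lambda)>0$ for $\lambda>0$ and $\lambda\mapsto x(\lambda)=x+\frac{A}{A+a(\lambda)}(y-x)$ is continuous on $(0,\infty)$, extending continuously to $\lambda=0$ (with value $y$ if $A>0$, and — a removable $0/0$ — with $x(\cdot)\equiv x$ if $A=0$). For the map $z\mapsto y(z)$: the objective $F(w,z)=f_p(w,z)+\frac{L_p}{p!}\|w-z\|^{p+1}$ is jointly continuous, strictly convex in $w$ (this is exactly the point of adding the $\|\cdot\|^{p+1}$ regularizer), and coercive in $w$ uniformly over $z$ in any compact set, since the polynomial part $f_p(\cdot,z)$ is dominated by the $\|w-z\|^{p+1}$ term; by a standard argument (Berge's maximum theorem, or directly: the minimizers stay in a bounded set and uniqueness upgrades sequential limits) $z\mapsto y(z)$ is continuous. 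Composing, $g$ is continuous on $\R_+$.

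Next I would analyze the two ends. At $\lambda=0$ the prefactor $\lambda$ annihilates everything, so $g(0)=0$ (and $g(\lambda)\to 0$ as $\lambda\to 0^+$ in all cases, as $\|y(x(\lambda))-x(\lambda)\|$ stays bounded near $0$). As $\lambda\to\infty$ we have $a(\lambda)\geq\lambda\to\infty$, so $\frac{A}{A+a(\lambda)}\to 0$ and $x(\lambda)\to x$; by continuity of $y(\cdot)$, $\|y(x(\lambda))-x(\lambda)\|\to\|y(x)-x\|$. The crucial observation is that this limit is strictly positive: the gradient of $F(\cdot,x)$ at $w=x$ equals $\nabla_y f_p(x,x)+0=\nabla f(x)$, and $\nabla f(x)\neq 0$ because $f$ is convex and $f(x)\neq f(x^*)$ means $x$ is not a minimizer; hence $x$ is not the minimizer of $F(\cdot,x)$, i.e.\ $y(x)\neq x$. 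Therefore $g(\lambda)=\lambda\,\|y(x(\lambda))-x(\lambda)\|^{p-1}\to\infty$ (trivially for $p=1$, where $g(\lambda)=\lambda$; and because the norm term tends to a positive constant when $p\geq 2$).

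Finally I would conclude: $g$ is continuous on the connected set $\R_+$, so $g(\R_+)$ is an interval containing $g(0)=0$ and unbounded above, hence equal to $\R_+$ by the intermediate value theorem. I expect the only genuinely delicate step to be the continuity of the argmin map $z\mapsto y(z)$ — everything else is a soft limiting argument — so in the writeup I would invoke strict convexity of $F(\cdot,z)$ and its uniform-in-$z$ coercivity explicitly there, and I would dispose of the degenerate case $A=0$ early so that the formal $0/0$ in the definition of $x(\lambda)$ is not a distraction.
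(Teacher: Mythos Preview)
Your proposal is correct and follows essentially the same line as the paper's own proof: continuity of $g$ (reducing to continuity of the argmin via strict convexity), $g(0)=0$, and $g(\lambda)\to\infty$ because $x(\lambda)\to x$ and $y(x)\neq x$ from $f(x)\neq f(x^*)$, then the intermediate value theorem. You simply make explicit several points the paper leaves implicit (the $A=0$ degeneracy, the coercivity argument for continuity of $z\mapsto y(z)$, and the reason $y(x)\neq x$ via $\nabla f(x)\neq 0$).
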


\begin{proof}
First we claim that $g(\lambda)$ is a continuous function of $\lambda$. The only non-trivial part of this statement is that $y(z)$ is a continuous function of $z$. The latter statement follows easily from the strict convexity of the function being optimized, see also Section~\ref{sec:implementation} for more details.

Next we claim that $g(0) = 0$, and furthermore since $f(x) \neq f(x^*)$ we also have $y(x) \neq x$ which in turns gives $g(+\infty) = +\infty$. This concludes the proof.
\end{proof}

\subsection{Proof of \eqref{eq:firststatement} in Theorem \ref{thm:main}} \label{sec:proofmain}
Recall from Lemma \ref{lem:basic2} that the rate of convergence of ATD is $\|x^*\|^2 / (2 A_k)$. We now finally give an estimate of $A_k$:
\begin{lemma} \label{lem:Abound}
One has, with $c_p = 2^{p-1} (p+1)^{\frac{3p+1}{2}} / (p-1)!$,
\[
A_k \geq \frac{1}{c_p \cdot L_p \cdot \|x^*\|^{p-1}} k^{\frac{3p+1}{2}} \,.
\]
\end{lemma}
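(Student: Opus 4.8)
Throughout I assume $p \geq 2$, the case $p = 1$ being classical accelerated gradient descent. The plan is to lower-bound $A_k$ by feeding the refined Monteiro--Svaiter estimate \eqref{eq:Alambdatradeoff} into a telescoping recursion for $(A_k)$. Lemma \ref{lem:controlstepsize} certifies \eqref{eq:igdrefined} with $\sigma = 1/2$ along the ATD trajectory, so \eqref{eq:Alambdatradeoff} applies with $\frac{1}{1-\sigma^2} = \frac43$. First I would use the \emph{lower} half of the line-search condition \eqref{eq:key4}, i.e.\ $\lambda_i L_p \|y_i - \tilde x_{i-1}\|^{p-1} \geq (p-1)!/2$, to get $\|y_i - \tilde x_{i-1}\|^2 \geq \big(\tfrac{(p-1)!}{2 L_p \lambda_i}\big)^{2/(p-1)}$; substituting into \eqref{eq:Alambdatradeoff} and collecting exponents ($-1 - \tfrac{2}{p-1} = -\tfrac{p+1}{p-1}$) gives
\[
\sum_{i=1}^k A_i\,\lambda_i^{-\frac{p+1}{p-1}} \;\leq\; \frac{4\|x^*\|^2}{3}\left(\frac{2 L_p}{(p-1)!}\right)^{\!\frac{2}{p-1}} \;=:\; C .
\]
Then, using $\lambda_i A_i = a_i^2$ to eliminate $\lambda_i$, each summand becomes $T_i := A_i^{\frac{2p}{p-1}}\big/ a_i^{\frac{2(p+1)}{p-1}}$ (here $1 + \tfrac{p+1}{p-1} = \tfrac{2p}{p-1}$), so $\sum_{i=1}^k T_i \leq C$ with all $T_i > 0$.

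Next I would convert $\sum_i T_i \leq C$ into growth of $A_k$. Writing $\alpha := \frac{p-1}{2(p+1)}$, the identity defining $T_i$ rearranges to $T_i^{-\alpha} = (A_i - A_{i-1})/A_i^{p/(p+1)}$. Since $t \mapsto t^{-p/(p+1)}$ is decreasing and $\tfrac{p}{p+1} < 1$, we have $(A_i - A_{i-1})/A_i^{p/(p+1)} \leq \int_{A_{i-1}}^{A_i} t^{-p/(p+1)}\,dt = (p+1)\big(A_i^{1/(p+1)} - A_{i-1}^{1/(p+1)}\big)$, valid also at $i = 1$ where $A_0 = 0$ (the integral converges since $\tfrac{p}{p+1}<1$). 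Telescoping gives $A_k^{1/(p+1)} \geq \frac{1}{p+1}\sum_{i=1}^k T_i^{-\alpha}$. Finally, since $t \mapsto t^{-\alpha}$ is convex and decreasing, Jensen's inequality with $\sum_{i=1}^k T_i \leq C$ yields $\sum_{i=1}^k T_i^{-\alpha} \geq k\,(C/k)^{-\alpha} = k^{1+\alpha}C^{-\alpha}$. Combining and raising to the power $p+1$ gives $A_k \geq (p+1)^{-(p+1)}\,C^{-\alpha(p+1)}\,k^{(1+\alpha)(p+1)}$; one then checks the exponent identities $(1+\alpha)(p+1) = \tfrac{3p+1}{2}$ and $\alpha(p+1) = \tfrac{p-1}{2}$, expands $C^{(p-1)/2} = (4/3)^{(p-1)/2}\,\|x^*\|^{p-1}\cdot \tfrac{2L_p}{(p-1)!}$, and concludes that the stated bound with $c_p$ holds provided $\frac{(p-1)!}{2(p+1)^{p+1}(4/3)^{(p-1)/2}} \geq \frac{1}{c_p}$, which after cancellation is the elementary inequality $2^{p-2}\big(\tfrac{3(p+1)}{4}\big)^{(p-1)/2} \geq 1$, true for every integer $p \geq 2$.

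The step I expect to be most delicate is not any single inequality but keeping the exponent bookkeeping consistent: the power $\tfrac{2}{p-1}$ produced by inverting the line search, the substitution $\lambda_i = a_i^2/A_i$, and the power $\tfrac{1}{p+1}$ introduced by the integral comparison must combine to give \emph{exactly} $\tfrac{3p+1}{2}$, and the constants ($\tfrac43$ from $1-\sigma^2$, $(p+1)^{p+1}$ from the telescoping, the $\tfrac{2L_p}{(p-1)!}$ factors) must be tracked precisely enough to fit under $c_p$. A minor point worth care is that only the \emph{lower} bound in \eqref{eq:key4} is used here — the upper bound's role was spent entirely in Lemma \ref{lem:controlstepsize} — and that the integral comparison must be justified at $i=1$ where $A_0 = 0$.
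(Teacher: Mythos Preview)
Your argument is correct and arrives at the claimed bound with the stated constant $c_p$; the exponent bookkeeping and the final constant check all go through exactly as you wrote. However, the route is genuinely different from the paper's. After the common first step --- substituting the lower half of \eqref{eq:key4} into \eqref{eq:Alambdatradeoff} to obtain $\sum_i A_i\lambda_i^{-(p+1)/(p-1)}\le C$ --- the paper does \emph{not} eliminate $\lambda_i$ via $\lambda_i=a_i^2/A_i$. Instead it writes $\sqrt{\lambda_j}=A_j^{(p-1)/(2(p+1))}\bigl(A_j/\lambda_j^{(p+1)/(p-1)}\bigr)^{-(p-1)/(2(p+1))}$, applies the reverse H\"older inequality to lower-bound $\sum_j\sqrt{\lambda_j}$ by a power of $\sum_j A_j^{(p-1)/(3p+1)}$, invokes Lemma~\ref{lem:Aestimate} to pass from $\sum_j\sqrt{\lambda_j}$ to $\sqrt{A_k}$, and then closes with a separate ODE-style recursion lemma (Lemma~\ref{lem:recursion}) solving $B_k^\alpha\ge c\sum_{j\le k}B_j$. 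Your approach replaces this reverse-H\"older-plus-recursion machinery with two more elementary devices: the integral comparison $(A_i-A_{i-1})A_i^{-p/(p+1)}\le(p+1)\bigl(A_i^{1/(p+1)}-A_{i-1}^{1/(p+1)}\bigr)$, which telescopes directly, and Jensen's inequality for $t\mapsto t^{-\alpha}$. Your path is arguably cleaner and self-contained (no auxiliary lemmas beyond \eqref{eq:Alambdatradeoff}), while the paper's path has the advantage of routing through $\sum_j\sqrt{\lambda_j}$, which is the quantity already appearing in the rate \eqref{eq:rate} and in Lemma~\ref{lem:Aestimate}, making the overall narrative of Theorem~\ref{thm:MS} more uniform. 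One small quibble: your aside that ``the upper bound's role was spent entirely in Lemma~\ref{lem:controlstepsize}'' is slightly imprecise --- that lemma uses \emph{both} halves of \eqref{eq:key4} to obtain $\sigma=1/2$ --- but this does not affect the proof.
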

\begin{proof}
Using Lemma \ref{lem:controlstepsize} (and in particular \eqref{eq:key4}) in \eqref{eq:Alambdatradeoff} we obtain, with $C_p = 8 \cdot \left( \frac{L_p}{(p-1)!} \right)^{\frac{2}{p-1}}$,
\begin{equation} \label{eq:rec1}
\sum_{i=1}^k \frac{A_i}{\lambda_{i}^{\frac{p+1}{p-1}}} \leq C_p \|x^*\|^2 \,.
\end{equation}
Now by reverse H\"{o}lder inequality, i.e. 
 $\|fg\|_1 \geq \|f\|_{\frac{1}{q}} \|g\|_{\frac{-1}{q-1}}$ for $q \geq 1$, and invoking this inequality with  $q = 1 + \frac{p - 1}{2(p + 1)} = \frac{3p + 1}{2(p + 1)}$ so that $\frac{-1}{1 - q}=-\frac{2(p + 1)}{p - 1}$, we have
\begin{equation}
\label{eq:rec2}
\sum_{j =1}^k \sqrt{\lambda_j}
= 
\sum_{j =1}^k 
\left(
A_j
\right)^{\frac{p - 1}{2(p + 1)}}
\left(
\frac{A_j}{ \lambda_j^{\frac{p + 1}{p - 1} } }
\right)^{- \frac{p - 1}{2(p + 1)}}
\geq 
\left(
\sum_{j =1}^k A_j^{\frac{p - 1}{3p + 1}}
\right)^{\frac{3p + 1}{2(p + 1)}}
\left(
\sum_{j =1}^k \frac{A_j}{ \lambda_j^{\frac{p + 1}{p - 1} } }
\right)^{-\frac{p - 1}{2(p + 1)}} ~.
\end{equation}
Combining \eqref{eq:rec1} and \eqref{eq:rec2} and using by Lemma \ref{lem:Aestimate} we have for all $k \geq 1$ that
\[
A_k \geq \frac{1}{4} \left(\sum_{j \in [k]} \sqrt{\lambda_j}\right)^2
\geq \frac{1}{4 (C_p \|x^*\|^2)^{\frac{p-1}{p+1}}} 
\left(
\sum_{j =1}^k A_j^{\frac{p-1}{3p+1}}
\right)^{\frac{3p+1}{p+1}}
\] 
Next we apply Lemma \ref{lem:recursion} (see below) with $\alpha= \frac{p+1}{p-1}$, $B_k = A_k^{\frac{p-1}{3p+1}}$ and $c=\frac{1}{4^{\frac{p+1}{3p+1}} (C_p \|x^*\|^2)^{\frac{p-1}{3p+1}}}$:
\[
B_k \geq \left( \frac{2}{p+1} \cdot c \cdot k \right)^{\frac{p-1}{2}} \,,
\]
or in other words, $A_k \geq \left( \frac{2}{p+1} \cdot c \cdot k \right)^{\frac{3p+1}{2}}$, which concludes the proof.
\end{proof}

\begin{lemma}\label{lem:recursion}
Given a non-decreasing positive sequence $B_{j}$ such that
$B_{k}^{\alpha}\geq c\cdot\sum_{j=1}^{k}B_{j}$.
Then, we have that
\[
B_{k}\geq\left(\frac{\alpha-1}{\alpha}c\cdot k\right)^{\frac{1}{\alpha-1}}
\]
\end{lemma}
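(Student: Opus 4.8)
The plan is to prove the stronger-looking but equivalent statement $B_k \ge (Ck)^{1/(\alpha-1)}$ with $C := \tfrac{\alpha-1}{\alpha}c$ by induction on $k$, exploiting that $\alpha>1$. For the base case $k=1$ the hypothesis reads $B_1^\alpha \ge c\,B_1$, and since $B_1>0$ this gives $B_1^{\alpha-1}\ge c \ge C$, i.e.\ $B_1 \ge C^{1/(\alpha-1)}$, which is the claim at $k=1$.

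For the inductive step, assume $B_j \ge (Cj)^{1/(\alpha-1)}$ for all $j\le k$. Plugging this into the hypothesis at index $k+1$,
\[
B_{k+1}^\alpha \;\ge\; c\sum_{j=1}^{k+1} B_j \;\ge\; c\,C^{1/(\alpha-1)}\sum_{j=1}^{k+1} j^{1/(\alpha-1)}.
\]
The key estimate is the integral comparison $\sum_{j=1}^{m} j^{\beta}\ge \int_0^m x^{\beta}\,dx$, valid for every $\beta>0$ because $x\mapsto x^\beta$ is increasing (so $\int_{j-1}^j x^\beta\,dx \le j^\beta$); applied with $\beta = 1/(\alpha-1)$ and using $\beta+1 = \alpha/(\alpha-1)$ it yields $\sum_{j=1}^{k+1} j^{1/(\alpha-1)} \ge \tfrac{\alpha-1}{\alpha}(k+1)^{\alpha/(\alpha-1)}$. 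Substituting this and using $c\cdot\tfrac{\alpha-1}{\alpha}=C$ gives $B_{k+1}^\alpha \ge C^{\alpha/(\alpha-1)}(k+1)^{\alpha/(\alpha-1)}$, and taking $\alpha$-th roots (legitimate since $t\mapsto t^\alpha$ is strictly increasing on $\R_+$) produces $B_{k+1} \ge (C(k+1))^{1/(\alpha-1)}$, closing the induction.

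There is essentially no obstacle beyond bookkeeping the exponents; the only points requiring a word of care are that $\alpha>1$ (so that $1/(\alpha-1)>0$, the integral comparison has the right sign, and the power map is invertible in the last step) and that the monotonicity of $(B_j)$ is not actually used in this argument—only positivity—though it is what makes the running hypothesis $B_k^\alpha \ge c\sum_{j\le k}B_j$ natural in the application. One could alternatively avoid induction by setting $S_k:=\sum_{j=1}^k B_j$, observing $S_k-S_{k-1}=B_k\ge (cS_k)^{1/\alpha}$, and integrating this discrete differential inequality, but the inductive version above delivers the stated constant $\tfrac{\alpha-1}{\alpha}c$ directly and cleanly.
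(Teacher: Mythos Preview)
Your inductive step contains a circularity: when you write
\[
\sum_{j=1}^{k+1} B_j \;\ge\; C^{1/(\alpha-1)}\sum_{j=1}^{k+1} j^{1/(\alpha-1)},
\]
the $j=k+1$ term requires $B_{k+1}\ge (C(k+1))^{1/(\alpha-1)}$, which is exactly the conclusion you are trying to establish. (Your own remark that monotonicity is never used confirms that you are not bounding $B_{k+1}$ any other way.) If instead you drop the $(k+1)$-th term and only sum to $k$, the integral comparison gives $B_{k+1}^\alpha \ge (Ck)^{\alpha/(\alpha-1)}$, i.e.\ $B_{k+1}\ge (Ck)^{1/(\alpha-1)}$, which is one index short and does not close the induction at the stated constant.

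The gap is repairable, but not as cleanly as you suggest. Keep the last summand as $cB_{k+1}$ on the right, so that with $y:=B_{k+1}$ and $T:=(Ck)^{1/(\alpha-1)}$ you have $y^\alpha - T^\alpha \ge cy$. Applying the convexity inequality $f(b)-f(a)\le f'(b)(b-a)$ to $f(t)=t^{\alpha/(\alpha-1)}$ with $a=T^{\alpha-1}$, $b=y^{\alpha-1}$ (note $y>T$) gives $y^\alpha-T^\alpha \le \tfrac{\alpha}{\alpha-1}\,y\,(y^{\alpha-1}-T^{\alpha-1})$, whence $y^{\alpha-1}-T^{\alpha-1}\ge \tfrac{\alpha-1}{\alpha}c = C$, i.e.\ $B_{k+1}^{\alpha-1}\ge C(k+1)$, which now closes the induction with the correct constant. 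The paper takes the alternative route you allude to at the end: it passes to the continuous integral inequality $U_t^\alpha = c\int_0^t U_s\,ds$, solves the resulting ODE $(U_t^{\alpha-1})' = \tfrac{\alpha-1}{\alpha}c$, and uses the base-case slack $B_1^{\alpha-1}\ge c$ to absorb the shift by one.
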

\begin{proof}
We extend $B_{t}=B_{\left\lceil t\right\rceil }$. Note that
\[
B_{t}^{\alpha}=B_{\left\lceil t\right\rceil }^{\alpha}\geq c\cdot\sum_{j=1}^{\left\lceil t\right\rceil }B_{j}\geq c\cdot\int_{0}^{t}B_{s}ds.
\]
We can upper bound this integral inequality $B_{t}\geq U_{t}$ where $U_1=B_1$ and
\[
U_{t}^{\alpha}=c\cdot\int_{0}^{t}U_{s}ds.
\]
Taking derivatives on both sides, we have
$$
\alpha U_{t}^{\alpha-1}\frac{dU_{t}}{dt}=c\cdot U_{t}.
$$ and hence $\frac{dU_{t}^{\alpha-1}}{dt}=\frac{\alpha-1}{\alpha}c$.
Therefore, we have $B_t \geq U_{t}=(\frac{\alpha-1}{\alpha}c\cdot(t-1)+B_{1}^{\alpha-1})^{\frac{1}{\alpha-1}}$. Finally, the result follows from $B_1^{\alpha-1} \geq c$.
\end{proof}

\section{Complexity of the binary search step} \label{sec:implementation}
In this section, we show how to find $\lambda_{k+1}$ satisfying equation
\eqref{eq:key4}. For $k=0$, it is trivial since $\tilde{x}_0 = 0$.
From now on, we fix some $k>0$. To simplify the notation, we define
$\widetilde{x}_{\theta}=(1-\theta)x_{k}+\theta y_{k}$, $y_{\theta}=\argmin_{y}F(y-\widetilde{x}_{\theta},\widetilde{x}_{\theta})$
with
\[
F(z,x)=f_{p}(x+z,x)+\frac{L_{p}}{p!}\|z\|^{p+1},
\]
and $z_{\theta}=y_{\theta}-\widetilde{x}_{\theta}$. Note that the
$\lambda_{k+1}$ corresponding to $\theta$ is given by $\lambda_{k+1}=\frac{(1-\theta)^{2}}{\theta}A_{k}.$
Hence, our goal is to find $\theta$ such that
\[
\frac{1}{2}\leq\zeta(\theta)\leq\frac{p}{p+1}\quad\text{with}\quad\zeta(\theta)=\frac{(1-\theta)^{2}}{\theta}\frac{A_{k}\cdot L_{p}}{(p-1)!}\|z_{\theta}\|^{p-1}.
\]
Note that $\zeta(0)=+\infty$ and $\zeta(1)=0$. Hence, we can use
binary search to find $\theta$ that is close to $\theta^{*}$
such that $\zeta(\theta^{*})=\frac{7}{12}$ (or any value in $(\frac{1}{2},\frac{p}{p+1})$).
The main difficulty is to show how close $\theta$ need to be so that
$\zeta(\theta)\in[\frac{1}{2},\frac{p}{p+1}]$, or in other words to control the Lipschitz constant of $\zeta(\theta)$.

To bound the Lipschitz constant of $\zeta(\theta)$, we need to bound $\|z_{\theta}\|$
and $\|\frac{d}{d\theta}z_{\theta}\|$. First, we give an upper bound
on $\|\frac{d}{d\theta}z_{\theta}\|$.
\begin{lemma}
We have:
\label{lem:z_speed}
\[
\left\|\frac{d}{d\theta}z_{\theta}\right\|\leq5(p+1)^{2}\cdot\|x^{*}\|.
\]
\end{lemma}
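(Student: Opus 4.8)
The plan is to compute $\tfrac{d}{d\theta}z_\theta$ by implicitly differentiating the first-order optimality condition for $y_\theta$, and then bound every piece that appears. Recall $z_\theta=y_\theta-\widetilde{x}_\theta$ and that $y_\theta$ minimises the (strictly convex) regularised Taylor model with centre $\widetilde{x}_\theta$ as in \eqref{eq:atddef}; its optimality condition is
\[
\nabla_y f_p(y_\theta,\widetilde{x}_\theta)+\tfrac{L_p(p+1)}{p!}\,\|z_\theta\|^{p-1}z_\theta=0 .
\]
Differentiating in $\theta$ and writing $v:=\tfrac{d}{d\theta}\widetilde{x}_\theta=y_k-x_k$, $\Gamma_\theta:=\partial_x[\nabla_y f_p](y_\theta,\widetilde{x}_\theta)$, $N_\theta:=\nabla_z(\|z\|^{p-1}z)\big|_{z=z_\theta}$ (so $\|z_\theta\|^{p-1}I\preceq N_\theta$ and $\|N_\theta\|=p\|z_\theta\|^{p-1}$), and $M_\theta:=\nabla^2_y f_p(y_\theta,\widetilde{x}_\theta)+\tfrac{L_p(p+1)}{p!}N_\theta$ — the Hessian in $y$ of the model $f_p(\,\cdot\,,\widetilde{x}_\theta)+\tfrac{L_p}{p!}\|\,\cdot\,-\widetilde{x}_\theta\|^{p+1}$ at $y_\theta$ — one obtains $M_\theta\,\dot y_\theta=\bigl(\tfrac{L_p(p+1)}{p!}N_\theta-\Gamma_\theta\bigr)v$ and hence $\dot z_\theta=\dot y_\theta-v$.

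Next I would lower-bound $M_\theta$ and upper-bound $\Gamma_\theta$, so that the factors $\|z_\theta\|^{p-1}$ cancel. By Taylor's theorem and $L_p$-Lipschitzness of $\nabla^p f$ one has $\|\nabla^2 f(y)-\nabla^2_y f_p(y,x)\|\le\tfrac{L_p}{(p-1)!}\|y-x\|^{p-1}$; combined with $\nabla^2 f\succeq 0$ (convexity of $f$) this gives $\nabla^2_y f_p(y_\theta,\widetilde{x}_\theta)\succeq-\tfrac{L_p}{(p-1)!}\|z_\theta\|^{p-1}I$, so $M_\theta\succeq\bigl(\tfrac{L_p(p+1)}{p!}-\tfrac{L_p}{(p-1)!}\bigr)\|z_\theta\|^{p-1}I=\tfrac{L_p}{p!}\|z_\theta\|^{p-1}I$ and $\|M_\theta^{-1}\|\le\tfrac{p!}{L_p\|z_\theta\|^{p-1}}$. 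A telescoping computation identifies $\Gamma_\theta=\tfrac{1}{(p-1)!}\nabla^{p+1}f(\widetilde{x}_\theta)[z_\theta]^{p-1}$, so $\|\Gamma_\theta\|\le\tfrac{L_p}{(p-1)!}\|z_\theta\|^{p-1}$. Substituting, $\|\dot y_\theta\|\le\tfrac{p!}{L_p\|z_\theta\|^{p-1}}\cdot\tfrac{L_p(p+2)}{(p-1)!}\|z_\theta\|^{p-1}\,\|v\|=p(p+2)\|v\|$, whence
\[
\|\dot z_\theta\|\le\bigl(p(p+2)+1\bigr)\|v\|=(p+1)^2\,\|y_k-x_k\| .
\]

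It remains to show $\|y_k-x_k\|\le 5\|x^*\|$, which I expect to be the delicate step; I would split $\|y_k-x_k\|\le\|y_k-x^*\|+\|x^*-x_k\|$. For the second term: $\psi_k$ has Hessian $I$ and minimiser $x_k$ (Lemma \ref{lem:basic1}), so $\psi_k(x^*)=\psi_k(x_k)+\tfrac12\|x^*-x_k\|^2$; also $\psi_k(x^*)\le A_kf(x^*)+\tfrac12\|x^*\|^2$ (Lemma \ref{lem:basic1}) while $\psi_k(x_k)\ge A_kf(y_k)\ge A_kf(x^*)$ (Lemma \ref{lem:basic4} and optimality of $x^*$), giving $\|x^*-x_k\|\le\|x^*\|$. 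For $\|y_k-x^*\|$ I would \emph{not} induct directly; instead, writing $\widetilde{x}_{k-1}-x^*=\tfrac{A_{k-1}}{A_k}(y_{k-1}-x^*)+\tfrac{a_k}{A_k}(x_{k-1}-x^*)$ and using $\|y_k-\widetilde{x}_{k-1}\|\le\tfrac{2a_k}{A_k}\|x^*\|$ (from \eqref{eq:Alambdatradeoff} with $\sigma=\tfrac12$, which holds by Lemma \ref{lem:controlstepsize}, together with $\lambda_kA_k=a_k^2$ and $a_k\le A_k$) and $\|x_{k-1}-x^*\|\le\|x^*\|$, one gets $A_k\|y_k-x^*\|\le A_{k-1}\|y_{k-1}-x^*\|+3a_k\|x^*\|$; summing this telescoping recursion from $1$ to $k$ (with $A_0=0$) yields $\|y_k-x^*\|\le 3\|x^*\|$. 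Therefore $\|y_k-x_k\|\le 4\|x^*\|$ and $\|\dot z_\theta\|\le 4(p+1)^2\|x^*\|\le 5(p+1)^2\|x^*\|$.

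The step I expect to be the main obstacle is exactly the bound $\|y_k-x^*\|=O(\|x^*\|)$: a naive induction on $\|y_k-x^*\|$ (or on $\|y_k\|$) does not close, because although each increment $\|y_k-\widetilde{x}_{k-1}\|$ is $O(\|x^*\|)$ it is only of order $\tfrac{a_k}{A_k}\|x^*\|$ and $\sum_k\tfrac{a_k}{A_k}$ diverges logarithmically; running the recursion for the $A_k$-weighted quantity $A_k\|y_k-x^*\|$ makes it telescope cleanly. A minor technical point is the formula for $\Gamma_\theta$, which nominally involves $\nabla^{p+1}f$: under the stated hypotheses this exists only almost everywhere, but since $\nabla^p f$ is $L_p$-Lipschitz the estimate $\|\Gamma_\theta\|\le\tfrac{L_p}{(p-1)!}\|z_\theta\|^{p-1}$ holds at a.e.\ $\theta$, which suffices to control the Lipschitz constant of $\zeta(\theta)$.
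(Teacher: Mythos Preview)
Your proof is correct and follows essentially the same route as the paper: implicit differentiation of the first-order optimality condition, the Hessian lower bound $M_\theta\succeq\tfrac{L_p}{p!}\|z_\theta\|^{p-1}I$ via convexity of $f$ plus the Taylor remainder on $\nabla^2 f$, the telescoping identity that isolates $\tfrac{1}{(p-1)!}\nabla^{p+1}f(\widetilde{x}_\theta)[z_\theta]^{p-1}$, and then a diameter bound $\|y_k-x_k\|\le 5\|x^*\|$. The paper carries this out in the $(z,x)$-parametrization of $F(z,x)=f_p(x+z,x)+\tfrac{L_p}{p!}\|z\|^{p+1}$, which lets it write $\dot z_\theta=-(\nabla^2_{zz}F)^{-1}\nabla^2_{zx}F\cdot(y_k-x_k)$ directly and observe $\nabla^2_{zx}F=\nabla^2_{zz}F+\text{(correction)}$; you instead go through $\dot y_\theta$ and subtract $v$. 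Both computations yield the same $(p+1)^2$ constant.

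Where you genuinely differ is in the bound on $\|y_k-x^*\|$. The paper (Lemma \ref{lem:diameter}) unrolls the same recursion $A_kD_k\le A_{k-1}D_{k-1}+a_k\|x^*\|+A_k\|y_k-\widetilde{x}_{k-1}\|$ but then controls $\sum_j A_j\|y_j-\widetilde{x}_{j-1}\|$ via Cauchy--Schwarz against the \emph{full} sum in \eqref{eq:Alambdatradeoff}, arriving at $\|y_k-x^*\|\le 4\|x^*\|$. Your argument instead extracts a single term from \eqref{eq:Alambdatradeoff} to get $\|y_k-\widetilde{x}_{k-1}\|\le\tfrac{2a_k}{A_k}\|x^*\|$ and then telescopes cleanly to $\|y_k-x^*\|\le 3\|x^*\|$, a slightly sharper constant with a more elementary proof. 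Your remark that $\nabla^{p+1}f$ exists only a.e.\ under the stated hypotheses is a valid technical point that the paper does not address.
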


\begin{proof}
To compute the derivative of $z_{\theta}$, we note by optimality condition that 
\[
\nabla_{z}F(z_{\theta},\widetilde{x}_{\theta})=0.
\]
Taking derivatives with respect to $\theta$ on both sides gives 
\[
\nabla_{zz}^{2}F(z_{\theta},\widetilde{x}_{\theta})\cdot\frac{d}{d\theta}z_{\theta}+\nabla_{zx}^{2}F(z_{\theta},\widetilde{x}_{\theta})\cdot\frac{d}{d\theta}\widetilde{x}_{\theta}=0.
\]
Hence, we have
\begin{equation}
\frac{d}{d\theta}z_{\theta}=-\left(\nabla_{zz}^{2}F(z_{\theta},\widetilde{x}_{\theta})\right)^{-1}\nabla_{zx}^{2}F(z_{\theta},\widetilde{x}_{\theta})\cdot(y_{k}-x_{k}).\label{eq:dz_dtheta}
\end{equation}
To bound $\frac{d}{d\theta}z_{\theta}$, it suffices to compute $\nabla_{zz}^{2}F(z,x)$
and $\nabla_{zx}^{2}F(z,x)$.

For $\nabla_{zz}^{2}F(z,x)$, we have
\[
\nabla_{zz}^{2}F(z,x) 
=\nabla_{zz}^{2}f_{p}(x+z,x)+\nabla^{2}\left[\frac{L_{p}}{p!}\|z\|^{p+1}\right].
\]
By doing a Taylor expansion of the Hessian function, one obtains:
\[
\|\nabla_{zz}^{2}f_{p}(x+z,x)-\nabla^{2}f(x+z)\|\leq\frac{L_{p}}{(p-1)!}\|z\|^{p-1}
\]
and hence
\[
\nabla_{zz}^{2}F(z,x)\succeq\nabla^{2}f(x+z)-\frac{L_{p}}{(p-1)!}\|z\|^{p-1}I+\frac{L_{p}(p+1)}{p!}\|z\|^{p-1}I\succeq\frac{L_{p}}{p!}\|z\|^{p-1}I
\]
where we used that $f$ is convex and 
\begin{equation}
\nabla^{2}\left[\|z\|^{p+1}\right]=(p+1)\|z\|^{p-1}\cdot I+(p+1)(p-1)\|z\|^{p-3}\cdot zz^{\top}.\label{eq:hessian_p_norm}
\end{equation}

For $\nabla_{zx}^{2}F(z,x)$, we recall that $F(z,x)=\sum_{i=0}^{p}\frac{1}{i!}D^{i}f(x)[z]^{i}+\frac{L_{p}}{p!}\|z\|^{p+1}$, and hence
\begin{align*}
\nabla_{zx}^{2}F(z,x) & =\sum_{i=1}^{p}\frac{1}{(i-1)!}D^{i+1}f(x)[z]^{i-1}\\
 & =\nabla_{zz}^{2}F(z,x)+\frac{1}{(p-1)!}D^{p+1}f(x)[z]^{p-1}-\nabla^{2}\left[\frac{L_{p}}{p!}\|z\|^{p+1}\right].
\end{align*}
Therefore, we have
\[
\left(\nabla_{zz}^{2}F(z,x)\right)^{-1}\left(\nabla_{zx}^{2}F(z,x)\right)=I+\left(\nabla_{zz}^{2}F(z,x)\right)^{-1}\left(\frac{D^{p+1}f(x)[z]^{p-1}}{(p-1)!}-\nabla^{2}\left[\frac{L_{p}}{p!}\|z\|^{p+1}\right]\right).
\]
and
\begin{align*}
\left\Vert \left(\nabla_{zz}^{2}F(z,x)\right)^{-1}\left(\nabla_{zx}^{2}F(z,x)\right)\right\Vert  & \leq1+\frac{p!}{L_{p}\|z\|^{p-1}}\left\Vert \frac{D^{p+1}f(x)[z]^{p-1}}{(p-1)!}-\nabla^{2}\left[\frac{L_{p}}{p!}\|z\|^{p+1}\right]\right\Vert \\
 & \leq1+\frac{p!}{L_{p}\|z\|^{p-1}}\left(\frac{L_{p}}{(p-1)!}\|z\|^{p-1}+\frac{L_{p}\cdot(p+1)p}{p!}\cdot\|z\|^{p-1}\right)\\
 & =(p+1)^{2}
\end{align*}
where we used (\ref{eq:hessian_p_norm}) and smoothness for the second inequality. Now, (\ref{eq:dz_dtheta})
and Lemma \ref{lem:diameter} below show
\[
\|\frac{d}{d\theta}z_{\theta}\|\leq(p+1)^{2}\cdot\|y_{k}-x_{k}\|\leq5(p+1)^{2}\cdot\|x^{*}\|.
\]
\end{proof}
\begin{lemma}
\label{lem:z_upper}We have that $\|z_{\theta}\|\leq12p^{3}\|x^{*}\|$
for all $0\leq\theta\leq1$.
\end{lemma}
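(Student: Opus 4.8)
The plan rests on the observation that, regarded as a function of $y$, the objective $h_\theta(y):=f_p(y,\widetilde{x}_\theta)+\frac{L_p}{p!}\|y-\widetilde{x}_\theta\|^{p+1}$ that $y_\theta$ minimizes is a \emph{global majorant} of $f$. Indeed, the $p^{th}$-order Taylor remainder bound gives, for every $y$,
\[
\Big|f(y)-f_p(y,\widetilde{x}_\theta)\Big|\leq\frac{L_p}{(p+1)!}\|y-\widetilde{x}_\theta\|^{p+1}\leq\frac{L_p}{p!}\|y-\widetilde{x}_\theta\|^{p+1},
\]
so that on the one hand $h_\theta(y)\geq f(y)+\frac{p}{p+1}\cdot\frac{L_p}{p!}\|y-\widetilde{x}_\theta\|^{p+1}$, and on the other hand $h_\theta(y)\leq f(y)+\frac{p+2}{p+1}\cdot\frac{L_p}{p!}\|y-\widetilde{x}_\theta\|^{p+1}$.

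First I would compare $h_\theta$ at its minimizer $y_\theta$ with its value at $x^*$. Since $h_\theta(y_\theta)\leq h_\theta(x^*)$, using the first inequality above together with $f(y_\theta)\geq f(x^*)$ on the left, and the second inequality above on the right, and then cancelling the common terms $f(x^*)$ and $\frac{L_p}{p!}$, one gets
\[
\frac{p}{p+1}\|z_\theta\|^{p+1}\leq\frac{p+2}{p+1}\|x^*-\widetilde{x}_\theta\|^{p+1},\qquad\text{hence}\qquad\|z_\theta\|\leq\Big(\tfrac{p+2}{p}\Big)^{\frac{1}{p+1}}\|x^*-\widetilde{x}_\theta\|\leq\sqrt{3}\,\|x^*-\widetilde{x}_\theta\|,
\]
where the last step uses $\tfrac{p+2}{p}\leq3$ and hence $(\tfrac{p+2}{p})^{1/(p+1)}\leq3^{1/2}$ for $p\geq1$.

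It then remains to bound $\|x^*-\widetilde{x}_\theta\|$. Since $\widetilde{x}_\theta=(1-\theta)x_k+\theta y_k$, the vector $\widetilde{x}_\theta-x^*$ is a convex combination of $x_k-x^*$ and $y_k-x^*$, so $\|x^*-\widetilde{x}_\theta\|\leq\max\{\|x_k-x^*\|,\|y_k-x^*\|\}$. For the first term, the estimate-sequence identity $\psi_k(x^*)=\psi_k(x_k)+\frac12\|x^*-x_k\|^2$ (the Hessian of $\psi_k$ is the identity), combined with $\psi_k(x_k)\geq A_kf(y_k)\geq A_kf(x^*)$ (Lemma~\ref{lem:basic4} with $\sigma=1/2$, valid by Lemma~\ref{lem:controlstepsize}) and $\psi_k(x^*)\leq A_kf(x^*)+\frac12\|x^*\|^2$ (Lemma~\ref{lem:basic1}), yields $\|x_k-x^*\|\leq\|x^*\|$. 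For the second term, Lemma~\ref{lem:diameter} gives $\|y_k-x_k\|\leq5\|x^*\|$, so $\|y_k-x^*\|\leq6\|x^*\|$. Therefore $\|x^*-\widetilde{x}_\theta\|\leq6\|x^*\|$ and $\|z_\theta\|\leq6\sqrt{3}\,\|x^*\|<12\|x^*\|\leq12p^3\|x^*\|$, which is the claim.

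The main obstacle is the initial majorization step: one must recognize that $h_\theta$ dominates $f$ \emph{everywhere}, not merely in a neighbourhood of $\widetilde{x}_\theta$, so that it is legitimate to evaluate it at the far-away point $x^*$; once that is in place, the rest reduces to bookkeeping with the Taylor-remainder constants (keeping the $\tfrac{p}{p+1}$ on one side against the $\tfrac{p+2}{p+1}$ on the other, so the ratio stays uniformly below $3$) and to the diameter bounds already available for $x_k$ and $y_k$. An alternative, essentially equivalent, route would instead start from the optimality condition \eqref{eq:KKT_TD}, which gives $\frac{L_p}{(p-1)!}\|z_\theta\|^{p}\leq\|\nabla f(y_\theta)\|$, and then bound $\|\nabla f(y_\theta)\|$; I expect the majorization argument to be cleaner since it avoids having to control the gradient at $y_\theta$ directly.
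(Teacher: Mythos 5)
Your proof is correct, but it follows a genuinely different route from the paper's. The paper only controls $\|z_\theta\|$ directly at $\theta=1$: there $\tilde{x}_1=y_k$, so the inequality $\frac{pL_p}{(p+1)!}\|z_\theta\|^{p+1}\leq F(z_\theta,\tilde{x}_\theta)-f(x^*)\leq f(\tilde{x}_\theta)-f(x^*)$ can be combined with the convergence guarantee $f(y_k)-f(x^*)\leq\|x^*\|^2/(2A_k)$; the bound is then transported to all $\theta\in[0,1]$ via the derivative estimate $\|\frac{d}{d\theta}z_\theta\|\leq 5(p+1)^2\|x^*\|$ of Lemma~\ref{lem:z_speed}, followed by Young's inequality and the lower bound on $A_k$ from Lemma~\ref{lem:Abound}. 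You instead exploit that the regularized model $h_\theta$ sandwiches $f$ globally, and compare its value at the minimizer $y_\theta$ with its value at the fixed point $x^*$; after cancelling $f(x^*)$ this yields $\|z_\theta\|\leq(\frac{p+2}{p})^{1/(p+1)}\|x^*-\tilde{x}_\theta\|$ uniformly in $\theta$, and the diameter bounds of Lemma~\ref{lem:diameter} finish the job. Your argument is cleaner and more self-contained: it works for every $\theta$ at once, avoids Lemma~\ref{lem:z_speed}, Young's inequality, and the $A_k$ estimate entirely, and yields a dimension- and $p$-independent constant (about $6\sqrt{3}$, or even $4\sqrt{3}$ if you use $\|y_k-x^*\|\leq 4\|x^*\|$ from Lemma~\ref{lem:diameter} directly instead of the slightly roundabout $\|y_k-x_k\|\leq 5\|x^*\|$ detour), which of course implies the stated $12p^3\|x^*\|$. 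The only dependency you share with the paper is Lemma~\ref{lem:diameter}, which is proved independently of this lemma, so there is no circularity.
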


\begin{proof}
By doing a Taylor expansion of the function $f$, one obtains:
\[
f_{p}(\widetilde{x}_{\theta}+z_{\theta},\widetilde{x}_{\theta})\geq f(\widetilde{x}_{\theta}+z_{\theta})-\frac{L_{p}}{(p+1)!}\|z_{\theta}\|^{p+1}.
\]
Hence, we have that
\begin{equation}
F(z_{\theta},\widetilde{x}_{\theta}) =f_{p}(\widetilde{x}_{\theta}+z_{\theta},\widetilde{x}_{\theta})+\frac{L_{p}}{p!}\|z_{\theta}\|^{p+1}
\geq f(\widetilde{x}_{\theta}+z_{\theta})+\frac{L_{p}\cdot p}{(p+1)!}\|z_{\theta}\|^{p+1}.\label{eq:F_lower}
\end{equation}
Rearranging the term, we have that 
\[
\|z_{\theta}\|^{p+1}\leq\frac{(p+1)!}{L_{p}\cdot p}\cdot(F(z_{\theta},\widetilde{x}_{\theta})-\min_{x}f(x))\leq\frac{(p+1)!}{L_{p}\cdot p}\cdot(f(\widetilde{x}_{\theta})-\min_{x}f(x))
\]
where we used that $F(z_{\theta},\widetilde{x}_{\theta})\leq F(0,\widetilde{x}_{\theta})=f(\widetilde{x}_{\theta})$.

For $\theta=1$, we have $\tilde{x}_{\theta}=y_{k}$ and hence
\[
\|z_{1}\|^{p+1}\leq\frac{(p+1)!}{L_{p}\cdot p}(f(y_{k})-\min_{x}f(x))\leq\frac{(p+1)!}{2p\cdot A_{k}\cdot L_{p}}\|x^{*}\|^{2}
\]
where we used \eqref{eq:tosatisfy2} at the end. Using Lemma \ref{lem:z_speed} and Young's inequality, we
have
\begin{align*}
\|z_{\theta}\| & \leq\left(\frac{(p+1)!}{2p\cdot A_{k}\cdot L_{p}}\right)^{\frac{1}{p+1}}\|x^{*}\|^{\frac{2}{p+1}}+5(p+1)^{2}\cdot\|x^{*}\|\\
 & \leq\frac{2}{p+1}\|x^{*}\|+\frac{p-1}{p+1}\left(\frac{(p+1)!}{2p\cdot A_{k}\cdot L_{p}}\right)^{\frac{1}{p-1}}+5(p+1)^{2}\|x^{*}\|.
\end{align*}
Using $A_{k}\geq\frac{k^{\frac{3p+1}{2}}}{c_{p}\cdot L_{p}\cdot\|x^{*}\|^{p-1}}\geq\frac{1}{c_{p}\cdot L_{p}\cdot\|x^{*}\|^{p-1}}$
and $c_{p}=\frac{2^{p-1}(p+1)^{\frac{3p+1}{2}}}{(p-1)!}$, we have
\[
\|z_{\theta}\|\leq\left(\frac{2}{p+1}+\frac{p-1}{p+1}\left(\frac{(p+1)!\cdot c_{p}}{2p}\right)^{\frac{1}{p-1}}+5(p+1)^{2}\right)\|x^{*}\|\leq12p^{3}\|x^{*}\|.
\]
\end{proof}
Next, we have a lower bound of $\|z_{\theta}\|$. We also prove Lipschitzness of $\theta \mapsto f(y_{\theta})$.
\begin{lemma}
\label{lem:z_lower}
We have
\[
\|z_{\theta}\|^p \geq \frac{p!}{L_p \cdot (p+2) \cdot (12 p^3 +4)\|x^*\|} (f(y_{\theta}) - f(x^*)) \,.
\]
Furthermore $\theta \mapsto f(y_{\theta})$ is Lipschitz, with Lipschitz constant upper bounded by
\[ 
L_{p}\cdot(12p^{3}\|x^{*}\|)^{p+1} \,.
\]
\end{lemma}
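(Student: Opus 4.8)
The plan is to get both assertions out of the first-order optimality condition satisfied by $z_{\theta}$ combined with the Lipschitzness of $\nabla^{p}f$, feeding in the size bound $\|z_{\theta}\|\le 12p^{3}\|x^{*}\|$ from Lemma~\ref{lem:z_upper} and the diameter estimates of Lemma~\ref{lem:diameter}.

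For the lower bound on $\|z_{\theta}\|^{p}$, I would argue as in the proof of Lemma~\ref{lem:controlstepsize}. Optimality of $z_{\theta}$ for $F(\cdot,\widetilde{x}_{\theta})$ gives, exactly as in \eqref{eq:KKT_TD}, $\nabla_{y}f_{p}(y_{\theta},\widetilde{x}_{\theta})=-\frac{L_{p}(p+1)}{p!}z_{\theta}\|z_{\theta}\|^{p-1}$, so $\|\nabla_{y}f_{p}(y_{\theta},\widetilde{x}_{\theta})\|=\frac{L_{p}(p+1)}{p!}\|z_{\theta}\|^{p}$. Together with the Taylor-remainder bound $\|\nabla f(y)-\nabla_{y}f_{p}(y,x)\|\le\frac{L_{p}}{p!}\|y-x\|^{p}$ this yields $\|\nabla f(y_{\theta})\|\le\frac{L_{p}(p+2)}{p!}\|z_{\theta}\|^{p}$. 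Convexity of $f$ then gives $f(y_{\theta})-f(x^{*})\le\nabla f(y_{\theta})\cdot(y_{\theta}-x^{*})\le\|\nabla f(y_{\theta})\|\cdot\|y_{\theta}-x^{*}\|$, and since $y_{\theta}=\widetilde{x}_{\theta}+z_{\theta}$ with $\widetilde{x}_{\theta}$ on the segment $[x_{k},y_{k}]$, Lemma~\ref{lem:diameter} (in the form $\|\widetilde{x}_{\theta}-x^{*}\|\le 4\|x^{*}\|$) together with Lemma~\ref{lem:z_upper} gives $\|y_{\theta}-x^{*}\|\le(12p^{3}+4)\|x^{*}\|$. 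Plugging in and solving for $\|z_{\theta}\|^{p}$ is exactly the claimed inequality.

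For Lipschitzness of $\theta\mapsto f(y_{\theta})$, I would bound $\frac{d}{d\theta}f(y_{\theta})=\nabla f(y_{\theta})\cdot\frac{d}{d\theta}y_{\theta}$ one factor at a time. Since $\widetilde{x}_{\theta}=x_{k}+\theta(y_{k}-x_{k})$, we have $\frac{d}{d\theta}y_{\theta}=(y_{k}-x_{k})+\frac{d}{d\theta}z_{\theta}$, and by \eqref{eq:dz_dtheta} this equals $\big(I-(\nabla_{zz}^{2}F)^{-1}\nabla_{zx}^{2}F\big)(y_{k}-x_{k})$. The computation in the proof of Lemma~\ref{lem:z_speed} writes $(\nabla_{zz}^{2}F)^{-1}\nabla_{zx}^{2}F=I+M$ with $\|M\|\le p(p+2)\le(p+1)^{2}$, so $\|\frac{d}{d\theta}y_{\theta}\|\le(p+1)^{2}\|y_{k}-x_{k}\|\le 5(p+1)^{2}\|x^{*}\|$ using Lemma~\ref{lem:diameter}. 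Combining with $\|\nabla f(y_{\theta})\|\le\frac{L_{p}(p+2)}{p!}\|z_{\theta}\|^{p}\le\frac{L_{p}(p+2)}{p!}(12p^{3}\|x^{*}\|)^{p}$ from Lemma~\ref{lem:z_upper} gives $|\frac{d}{d\theta}f(y_{\theta})|\le\frac{5(p+2)(p+1)^{2}}{p!}L_{p}(12p^{3}\|x^{*}\|)^{p}\|x^{*}\|$, and since $\frac{5(p+2)(p+1)^{2}}{p!}\le 12p^{3}$ for every $p\ge 2$ (while for $p=1$ condition \eqref{eq:key4} pins $\lambda_{k+1}$ down with no search at all), this is at most $L_{p}(12p^{3}\|x^{*}\|)^{p+1}$, which by the mean value theorem gives the claimed Lipschitz bound.

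The one genuinely delicate point I anticipate is justifying differentiability of $\theta\mapsto z_{\theta}$ uniformly on $[0,1]$. Away from the closed set $\{\theta : z_{\theta}=0\}$ this is the implicit function theorem applied to $\nabla_{z}F(z_{\theta},\widetilde{x}_{\theta})=0$, using strict convexity of $F(\cdot,\widetilde{x}_{\theta})$ exactly as in Section~\ref{sec:continuity} and Lemma~\ref{lem:z_speed}; at a parameter with $z_{\theta}=0$ one has $\nabla f(\widetilde{x}_{\theta})=\nabla_{z}F(0,\widetilde{x}_{\theta})=0$, so $f(y_{\theta})=f(\widetilde{x}_{\theta})=f(x^{*})$ and the first inequality holds trivially there, while the Lipschitz estimate extends to all of $[0,1]$ by continuity of $\theta\mapsto z_{\theta}$ (Section~\ref{sec:continuity}) and of $\theta\mapsto f(y_{\theta})$. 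Beyond this, the only work is bookkeeping the constants so they land on $(12p^{3}+4)$ and $(12p^{3}\|x^{*}\|)^{p+1}$; the structural content is just the optimality condition, the Lipschitzness of $\nabla^{p}f$, and the already-established diameter and $\|z_{\theta}\|$ bounds.
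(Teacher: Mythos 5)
Your proof is correct and follows essentially the same route as the paper's: the first inequality is obtained from the optimality condition together with the Taylor remainder bound giving $\|\nabla f(y_\theta)\| \le \frac{L_p(p+2)}{p!}\|z_\theta\|^p$, convexity, and the diameter estimates, and the Lipschitz bound combines that gradient estimate with the speed bounds on $\tilde{x}_\theta$ and $z_\theta$ from Lemmas \ref{lem:z_speed} and \ref{lem:diameter} (the paper uses the convexity inequality plus a triangle inequality where you use the derivative and the mean value theorem, which is an immaterial difference). Your added care about differentiability at $z_\theta=0$ and the case $p=1$ is welcome but does not change the argument.
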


\begin{proof}
By the optimality of $z_{\theta}$, we have that
\[
\nabla_{z}f_{p}(\widetilde{x}_{\theta}+z_{\theta},\widetilde{x}_{\theta})+\frac{L_{p}\cdot(p+1)}{p!}\|z_{\theta}\|^{p-1}z_{\theta}=0.
\]
By doing a Taylor expansion of the gradient function, one obtains:
\[
\|\nabla_{z}f_{p}(\widetilde{x}_{\theta}+z_{\theta},\widetilde{x}_{\theta})-\nabla f(\widetilde{x}_{\theta}+z_{\theta})\|\leq\frac{L_{p}}{p!}\|z_{\theta}\|^{p}.
\]
Hence, we have $\|\nabla f(\widetilde{x}_{\theta}+z_{\theta})\|\leq\frac{L_{p}\cdot(p+2)}{p!}\|z_{\theta}\|^{p}$
and
\[
f(y_\theta) = f(\widetilde{x}_{\theta}+z_{\theta})\leq f(x^*)+\frac{L_{p}\cdot(p+2)}{p!}\|z_{\theta}\|^{p}\|\widetilde{x}_{\theta}+z_{\theta}-x^{*}\|.
\]
Since $\widetilde{x}_{\theta}$ is convex combination of $x_{k}$
and $y_{k}$, Lemma \ref{lem:diameter} shows that $\|\widetilde{x}_{\theta}-x^{*}\|\leq4\|x^{*}\|$
and Lemma \ref{lem:z_upper} shows that $\|z_{\theta}\|\leq12p^{3}\|x^{*}\|$.
Combining both, we have $\|\widetilde{x}_{\theta}+z_{\theta}-x^{*}\|\leq(12p^{3}+4)\|x^{*}\|$ and hence
$$f(y_{\theta})-f(x^{*})\leq\frac{L_{p}\cdot(p+2)}{p!}\|z_{\theta}\|^{p}\cdot(12p^{3}+4)\|x^{*}\|.$$
Rearranging gives the first inequality.

For the Lipschitz statement we note that, as above, we have:
\begin{align*}
f(y_{\theta})-f(y_{\theta'}) & \leq\frac{L_{p}\cdot(p+2)}{p!}\|z_{\theta}\|^{p}\|y_{\theta}-y_{\theta'}\|\\
 & \leq\frac{L_{p}\cdot(p+2)}{p!}\cdot(12p^{3}\|x^{*}\|)^{p}\cdot(\|\tilde{x}_{\theta}-\tilde{x}_{\theta'}\|+\|z_{\theta}-z_{\theta'}\|). 
\end{align*}
Lemma \ref{lem:diameter} shows that $\|\tilde{x}_{\theta}-\tilde{x}_{\theta'}\|=|\theta-\theta'|\cdot\|y_{k}-x_{k}\|\leq5\cdot\|x^{*}\|\cdot|\theta-\theta'|$.
Lemma \ref{lem:z_speed} shows that $\|z_{\theta}-z_{\theta'}\|\leq 5 (p+1)^{2}\|x^{*}\|\cdot|\theta-\theta'|$.
Combining both, we have
\[
f(y_{\theta})-f(y_{\theta'})\leq\frac{L_{p}\cdot(p+2)}{p!}\cdot(12p^{3}\|x^{*}\|)^{p}\cdot(5+5 (p+1)^{2})\|x^{*}\|\cdot|\theta-\theta'|.
\]
\end{proof}
We now give a bound on the Lipschitz constant $\zeta(\theta)$.
\begin{lemma}
\label{lem:lipschitz}
Denote 
\[
\omega_p (\theta) = 4(12p^{3})^{p+1}\cdot\left(1+A_{k}L_{p}\|x^{*}\|^{p-1}+\frac{L_{p}\|x^{*}\|^{p+1}}{\Delta(\theta)}\right) \,,
\]
and $\Delta(\theta) = f(y_{\theta}) - f(x^*)$. Then one has
\[
\left|\frac{d}{d\theta}\log\zeta(\theta)\right|\leq \omega_p (\theta) \cdot \left(1 + \frac{1}{\zeta(\theta)} + \zeta(\theta) \right)\,.
\]
\end{lemma}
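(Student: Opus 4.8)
The plan is to bound $\left|\frac{d}{d\theta}\log\zeta(\theta)\right|$ by splitting the logarithmic derivative into three pieces and controlling each by (a constant fraction of) $\omega_p(\theta)\bigl(1+\zeta(\theta)^{-1}+\zeta(\theta)\bigr)$. We may assume $\Delta(\theta)=f(y_\theta)-f(x^*)>0$, otherwise $\omega_p(\theta)=+\infty$ and there is nothing to prove; in particular $z_\theta\neq 0$. Writing $K:=\frac{A_kL_p}{(p-1)!}$ so that $\zeta(\theta)=\frac{(1-\theta)^2}{\theta}K\|z_\theta\|^{p-1}$, and using $\frac{d}{d\theta}\log\|z_\theta\|=\frac{z_\theta\cdot\frac{d}{d\theta}z_\theta}{\|z_\theta\|^2}$, one gets
\[
\frac{d}{d\theta}\log\zeta(\theta) = -\frac{2}{1-\theta} - \frac{1}{\theta} + (p-1)\,\frac{z_\theta\cdot\frac{d}{d\theta}z_\theta}{\|z_\theta\|^2},
\]
so by Cauchy--Schwarz it suffices to bound $\frac{2}{1-\theta}$, $\frac1\theta$, and $(p-1)\|\frac{d}{d\theta}z_\theta\|/\|z_\theta\|$.

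For the last (derivative) term I would write $\frac{\|\frac{d}{d\theta}z_\theta\|}{\|z_\theta\|}=\frac{\|\frac{d}{d\theta}z_\theta\|\cdot\|z_\theta\|^{p-1}}{\|z_\theta\|^p}$, bound the numerator by Lemma~\ref{lem:z_speed} ($\|\frac{d}{d\theta}z_\theta\|\le 5(p+1)^2\|x^*\|$) together with Lemma~\ref{lem:z_upper} ($\|z_\theta\|\le 12p^3\|x^*\|$), and bound $\|z_\theta\|^p$ below by Lemma~\ref{lem:z_lower}; this produces a bound of the form $(\mathrm{poly}(p))\cdot\frac{L_p\|x^*\|^{p+1}}{\Delta(\theta)}$, which sits inside $\omega_p(\theta)$ once one checks the polynomial factor is dominated by the prefactor $4(12p^3)^{p+1}$. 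For $\frac{2}{1-\theta}$, I would solve the defining identity of $\zeta$ for $(1-\theta)^2$ and use $\theta\le1$ to get $\frac{1}{(1-\theta)^2}\le\frac{K\|z_\theta\|^{p-1}}{\zeta(\theta)}$; then $2\sqrt{ab}\le a+b$ gives $\frac{2}{1-\theta}\le K\|z_\theta\|^{p-1}+\frac{1}{\zeta(\theta)}$, and bounding $\|z_\theta\|^{p-1}\le(12p^3\|x^*\|)^{p-1}$ via Lemma~\ref{lem:z_upper} turns the first summand into a multiple of $A_kL_p\|x^*\|^{p-1}$, which is absorbed into $\omega_p(\theta)$, while the second summand is exactly the $\zeta(\theta)^{-1}$ contribution.

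For $\frac1\theta$ I would use the elementary inequality $\frac1\theta\le 2+\frac{2(1-\theta)^2}{\theta}$ valid for all $\theta\in(0,1)$ (trivial when $\theta\ge\tfrac12$; when $\theta<\tfrac12$ bound $\frac{1}{1-\theta}\le2$ inside $\frac{1-\theta}{\theta}=\frac{(1-\theta)^2}{\theta(1-\theta)}$), then rewrite $\frac{(1-\theta)^2}{\theta}=\frac{\zeta(\theta)}{K\|z_\theta\|^{p-1}}$ and bound $\frac{1}{\|z_\theta\|^{p-1}}=\frac{\|z_\theta\|}{\|z_\theta\|^p}$ from above by Lemmas~\ref{lem:z_upper} and~\ref{lem:z_lower}. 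This leaves a term proportional to $\frac{\|x^*\|^2}{A_k}\cdot\frac{L_p}{\Delta(\theta)}\,\zeta(\theta)$, and I would then invoke the already-established lower bound $A_k\ge\bigl(c_pL_p\|x^*\|^{p-1}\bigr)^{-1}$ from Lemma~\ref{lem:Abound} (recall $k\ge1$; for $k=0$ the line search is trivial) to convert $\frac{\|x^*\|^2}{A_k}$ into a multiple of $L_p\|x^*\|^{p+1}$, landing inside the $\zeta(\theta)\cdot\frac{L_p\|x^*\|^{p+1}}{\Delta(\theta)}$ budget of $\omega_p(\theta)$. Summing the three bounds yields the claim.

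The conceptual ingredients — logarithmic derivative, Cauchy--Schwarz, the arithmetic--geometric mean inequality, and rearranging the definition of $\zeta$ — are all routine; the real work, and the main obstacle, is the careful bookkeeping of the $p$-dependent constants needed to see that each of the three contributions is genuinely subsumed by $\omega_p(\theta)$'s prefactor $4(12p^3)^{p+1}$. In particular the subtle point is the $\frac1\theta$ term: its dependence on $A_k^{-1}$ must be killed by Lemma~\ref{lem:Abound} (rather than left dangling), which is why that lemma, already used for the rate, reappears here.
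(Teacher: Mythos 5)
Your decomposition, your lemma usage, and your endgame are essentially identical to the paper's proof: the same splitting of $\frac{d}{d\theta}\log\zeta$ into $-\frac{2}{1-\theta}-\frac1\theta+(p-1)\frac{z_\theta\cdot\frac{d}{d\theta}z_\theta}{\|z_\theta\|^2}$, the same use of Lemmas \ref{lem:z_speed}, \ref{lem:z_upper}, \ref{lem:z_lower} to convert $\|z_\theta\|^{-p}$ into $\Delta(\theta)^{-1}$, and the same invocation of $A_k\geq (c_pL_p\|x^*\|^{p-1})^{-1}$ to kill the $A_k^{-1}$ in the $\frac1\theta$ term.

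One step as written is wrong, though easily repaired. From $\zeta(\theta)=\frac{(1-\theta)^2}{\theta}K\|z_\theta\|^{p-1}$ you get the identity $\frac{\theta}{(1-\theta)^2}=\frac{K\|z_\theta\|^{p-1}}{\zeta(\theta)}$, so using $\theta\leq 1$ gives $\frac{1}{(1-\theta)^2}\geq\frac{K\|z_\theta\|^{p-1}}{\zeta(\theta)}$ --- the \emph{reverse} of the upper bound you claim. The correct upper bound carries an extra factor $\frac1\theta$, which blows up exactly where $\frac{2}{1-\theta}$ is harmless. The fix is either the case split you already use for $\frac1\theta$ (for $\theta<\frac12$ bound $\frac{2}{1-\theta}\leq 4$; for $\theta\geq\frac12$ bound $\frac1\theta\leq 2$ in the identity), or the paper's one-line inequality $\frac{1}{1-\theta}\leq 1+\frac{\theta}{(1-\theta)^2}=1+\frac{K\|z_\theta\|^{p-1}}{\zeta(\theta)}$, which uses the identity in the correct direction and needs no AM--GM. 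With that repair the rest of your bookkeeping goes through as in the paper.
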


\begin{proof}
Note that
\[
\frac{d}{d\theta}\log\zeta(\theta)=-\frac{2}{1-\theta}-\frac{1}{\theta}+(p-1)\frac{z_\theta \cdot \frac{d}{d\theta}z_{\theta}}{\|z_{\theta}\|^{2}}.
\]
Lemma \ref{lem:z_speed} shows that
\[
\left|\frac{d}{d\theta}\log\zeta(\theta)\right|\leq\frac{2}{1-\theta}+\frac{1}{\theta}+5(p+1)^{2}(p-1)\frac{\|x^{*}\|}{\|z_{\theta}\|}.
\]
The facts that
\[
\frac{1}{1-\theta}\leq1+\frac{\theta}{(1-\theta)^{2}} = 1+\frac{A_{k}\cdot L_{p}}{(p-1)! \cdot \zeta(\theta)} \|z_{\theta}\|^{p-1}
\]
and that
\[
\frac{1}{\theta}\leq2+\frac{(1-\theta)^{2}}{\theta}= 2+\frac{(p-1)! \cdot \zeta(\theta)}{A_{k}\cdot L_{p} \cdot\|z_{\theta}\|^{p-1}},
\]
yield:
\[
\left|\frac{d}{d\theta}\log\zeta(\theta)\right|\leq4+\frac{2 A_{k}\cdot L_{p}}{(p-1)! \cdot \zeta(\theta)}\|z_{\theta}\|^{p-1}+\frac{(p-1)! \cdot \zeta(\theta)}{A_{k}\cdot L_{p} \cdot \|z_{\theta}\|^{p-1}}+5(p+1)^{2}(p-1)\frac{\|x^{*}\|}{\|z_{\theta}\|} \,.
\]
It only remains to plug in Lemma \ref{lem:z_upper} and Lemma \ref{lem:z_lower} as follows:
For the second term, we have
\[
\frac{2A_{k}\cdot L_{p}}{(p-1)!\cdot\zeta(\theta)}\|z_{\theta}\|^{p-1}\leq\frac{2A_{k}\cdot L_{p}\cdot(12p^{3}\|x^{*}\|)^{p-1}}{\zeta(\theta)}\, .
\]
For the third term, we have
\begin{align*}
\frac{(p-1)!\cdot\zeta(\theta)}{A_{k}\cdot L_{p}\cdot\|z_{\theta}\|^{p-1}} & \leq\frac{(p-1)!\cdot12p^{3}\|x^{*}\|}{A_{k}\cdot L_{p}\cdot\|z_{\theta}\|^{p}}\cdot\zeta(\theta)\\
 & \leq\frac{(p-1)!\cdot12p^{3}\|x^{*}\|}{A_{k}\cdot L_{p}}\frac{L_{p}\cdot(p+2)\cdot(12p^{3}+4)\|x^{*}\|}{p!\cdot\Delta(\theta)}\cdot\zeta(\theta)\\
 & \leq4\cdot \frac{(12p^{3}\|x^{*}\|)^{2}}{A_{k}\cdot\Delta(\theta)}\cdot\zeta(\theta).
\end{align*}
Using $A_{k}\geq\frac{k^{\frac{3p+1}{2}}}{c_{p}\cdot L_{p}\cdot\|x^{*}\|^{p-1}}\geq\frac{1}{c_{p}\cdot L_{p}\cdot\|x^{*}\|^{p-1}}$
and $c_{p}=\frac{2^{p-1}(p+1)^{\frac{3p+1}{2}}}{(p-1)!}$, we have
\begin{align*}
\frac{(p-1)!\cdot\zeta(\theta)}{A_{k}\cdot L_{p}\cdot\|z_{\theta}\|^{p-1}} & \leq\frac{2^{p+1}(p+1)^{\frac{3p+1}{2}}}{(p-1)!}\frac{L_{p}\cdot\|x^{*}\|^{p-1}\cdot(12p^{3}\|x^{*}\|)^{2}}{\Delta(\theta)}\cdot\zeta(\theta)\\
 & \leq2^{p+1}(p+1)^{\frac{3p+1}{2}}(12p^{3})^{2}\cdot\frac{L_{p}\cdot\|x^{*}\|^{p+1}}{\Delta(\theta)}\cdot\zeta(\theta)\\
 & \leq4\cdot(12p^{3})^{p+1}\cdot\frac{L_{p}\cdot\|x^{*}\|^{p+1}}{\Delta(\theta)}\cdot\zeta(\theta).
\end{align*}
For the last term, we have
\begin{align*}
5(p+1)^{2}(p-1)\frac{\|x^{*}\|}{\|z_{\theta}\|} & \leq5(p+1)^{2}(p-1)\frac{(12p^{3}\|x^{*}\|)^{p-1}\|x^{*}\|}{\|z_{\theta}\|^{p}}\\
 & \leq5(p+1)^{3}\cdot(12p^{3}\|x^{*}\|)^{p-1}\cdot\frac{L_{p}\cdot(p+2)\cdot(12p^{3}+4)\|x^{*}\|^{2}}{p!\cdot\Delta(\theta)}\\
 & \leq4\cdot(12p^{3})^{p+1}\cdot\frac{L_{p}\cdot\|x^{*}\|^{p+1}}{\Delta(\theta)}.
\end{align*}
Combining all terms, we have the result
\[
\left|\frac{d}{d\theta}\log\zeta(\theta)\right|\leq4+\frac{2A_{k}\cdot L_{p}\cdot(12p^{3}\|x^{*}\|)^{p-1}}{\zeta(\theta)}+4\cdot(12p^{3})^{p+1}\cdot\frac{L_{p}\cdot\|x^{*}\|^{p+1}}{\Delta(\theta)}\cdot(\zeta(\theta)+1)\,
\]
justifying the claimed upper bound.
\end{proof}

The next lemma is a straightforward calculus exercise which allows to us to analyze binary search with guarantees of the form given in Lemma \ref{lem:lipschitz}.
\begin{lemma} \label{lem:calculusexercise}
Let $g : [0,1] \rightarrow \R_+$ and $\theta^* \in [0,1]$ such that $g(\theta^*) = \frac{7}{12}$. Let $\omega\geq0$ such that any $\theta \in [0,1]$ with $|\theta - \theta^*| \leq \frac{1}{40 \omega}$ satisfies
\[
\left| \frac{d}{d\theta} \log g(\theta) \right| \leq \omega \cdot \left(1 + \frac{1}{g(\theta)} + g(\theta) \right) \,.
\]
Then one also has $g(\theta) \in [\frac12, \frac23]$.
\end{lemma}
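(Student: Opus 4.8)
The plan is to run a bootstrap / continuity argument anchored at $\theta^*$. Unwinding the statement, what must be shown is that \emph{every} $\theta \in [0,1]$ with $|\theta - \theta^*| \le \frac{1}{40\omega}$ satisfies $g(\theta) \in [\tfrac12,\tfrac23]$ (the degenerate case $\omega = 0$ is immediate: the hypothesis then forces $\frac{d}{d\theta}\log g \equiv 0$ on $[0,1]$, so $g \equiv \tfrac{7}{12}$). The one elementary ingredient is that the multiplicative factor appearing in the hypothesis is harmless on the target range: if $g(\theta) \in [\tfrac12,\tfrac23]$ then $\frac{1}{g(\theta)} \le 2$ and $g(\theta) \le 1$, so $1 + \frac{1}{g(\theta)} + g(\theta) \le 4$. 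Consequently, at any $\theta$ with $|\theta-\theta^*| \le \frac{1}{40\omega}$ \emph{and} $g(\theta) \in [\tfrac12,\tfrac23]$ we obtain the clean estimate $\bigl|\frac{d}{d\theta}\log g(\theta)\bigr| \le 4\omega$.

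Next I would exploit this on a maximal interval. Note $g$ is differentiable, hence continuous, on $J := [\theta^*-\frac{1}{40\omega},\theta^*+\frac{1}{40\omega}] \cap [0,1]$. Let $I \subseteq J$ be the largest closed interval containing $\theta^*$ on which $g$ takes values in $[\tfrac12,\tfrac23]$ (it is the connected component of $\{\theta \in J : g(\theta)\in[\tfrac12,\tfrac23]\}$ through $\theta^*$, and it is nonempty since $g(\theta^*) = \tfrac{7}{12}$). On $I$ the estimate of the previous step holds pointwise, so integrating $\frac{d}{d\theta}\log g$ from $\theta^*$ yields, for every $\theta \in I$,
\[
\bigl|\log g(\theta) - \log \tfrac{7}{12}\bigr| \ \le\ 4\omega\,|\theta-\theta^*| \ \le\ 4\omega \cdot \tfrac{1}{40\omega} \ =\ \tfrac{1}{10}\,,
\]
hence $g(\theta) \in \bigl[\tfrac{7}{12}e^{-1/10},\, \tfrac{7}{12}e^{1/10}\bigr]$. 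A direct numerical check shows this interval sits \emph{strictly} inside $[\tfrac12,\tfrac23]$: indeed $\tfrac{7}{12}e^{1/10} < \tfrac23 \iff e^{1/10} < \tfrac87$ (true, as $e^{1/10} \approx 1.105 < 1.143$), and $\tfrac{7}{12}e^{-1/10} > \tfrac12 \iff e^{1/10} < \tfrac76$ (true as well).

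Finally I would close the bootstrap. If an endpoint $\theta_1$ of $I$ lay in the interior of $J$, then by the previous paragraph $g(\theta_1)$ would lie in the \emph{open} interval $(\tfrac12,\tfrac23)$, and continuity of $g$ would give $g \in [\tfrac12,\tfrac23]$ on a strictly larger subinterval of $J$, contradicting maximality of $I$. Hence $I = J$, which is precisely the claim. The only genuinely delicate point is this bootstrap structure: the hypothesis supplies a \emph{usable} bound on $\frac{d}{d\theta}\log g$ only after one already knows $g \in [\tfrac12,\tfrac23]$, so naively integrating the derivative bound would be circular — the continuity/maximality step is what legitimately propagates the control outward from $\theta^*$ to all of $J$. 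Everything else is the routine verification that $e^{\pm 1/10}\cdot \tfrac{7}{12}$ stays comfortably within $[\tfrac12,\tfrac23]$.
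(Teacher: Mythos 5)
Your proof is correct and follows essentially the same continuity/bootstrap argument as the paper: on the maximal region around $\theta^*$ where $g$ stays in $[\tfrac12,\tfrac23]$ the hypothesis gives a usable derivative bound, which forces $g$ strictly inside $(\tfrac12,\tfrac23)$ there and contradicts maximality unless that region is all of the $\tfrac{1}{40\omega}$-neighborhood. The only (immaterial) difference is that you integrate the bound $|\tfrac{d}{d\theta}\log g|\le 4\omega$ to get a multiplicative window $[\tfrac{7}{12}e^{-1/10},\tfrac{7}{12}e^{1/10}]$, whereas the paper multiplies through by $g$ to bound $|g'|\le \tfrac{19}{9}\omega$ and gets an additive window of width $<\tfrac{1}{12}$ around $\tfrac{7}{12}$.
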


\begin{proof}
Let $h$ be the largest number such that $|\theta-\theta^{*}|\leq h$
implies $g(\theta)\in[\frac{1}{2},\frac{2}{3}]$. It suffices to show $h\geq\frac{1}{40\omega}$. Proceed by contradiction and suppose that $h\leq\frac{1}{40\omega}$. For
any $\theta$ such that $|\theta-\theta^{*}|\leq h$, by the assumption
on $g$ and $h$, we have
\[
\left|\frac{d}{d\theta}g(\theta)\right|\leq\omega\cdot(g(\theta)+1+g^{2}(\theta))\leq\omega\cdot\left(\frac{2}{3}+1+\left(\frac{2}{3}\right)^{2}\right)=\frac{19}{9}\omega.
\]
Hence, for any $\theta$ such that $|\theta-\theta^{*}|\leq h$, we
have $|g(\theta)-g(\theta^{*})|\leq h\cdot\frac{19}{9}\omega<\frac{1}{12}$.
Since $g$ is continuous and  $g(\theta^*) = \frac{7}{12}$ this contradicts the assumption of $h$ being the largest. Therefore $|\theta-\theta^{*}|\leq \frac{1}{40\omega}$ implies that $g(\theta) \in [\frac12, \frac23]$ as desired.
\end{proof}
Now, we can prove our main theorem of this section. 

\begin{theorem}
Let $\epsilon > 0$. At iteration $k$, using at most $30p\log_{2}p+\log_{2}\left\lceil \frac{L_{p}\|x^{*}\|^{p+1}}{\epsilon}\right\rceil$ calls to the $p^{th}$ order Taylor oracle we find either a point $y$ such that $f(y) - f(x^*) \leq \epsilon$ or we find $\lambda_{k+1}$ that satisfies \eqref{eq:key4}.
\end{theorem}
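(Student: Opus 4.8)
The plan is a bisection on the scalar parameter $\theta\in(0,1]$, exploiting the monotone boundary behaviour $\zeta(0^+)=+\infty$ and $\zeta(1)=0$ together with continuity of $\theta\mapsto\zeta(\theta)$ (which reduces to continuity of $y_\theta$, itself a consequence of strict convexity of $F(\cdot,\widetilde x_\theta)$). The case $k=0$ is immediate since then $\widetilde x_0=0$ and $y_1$ does not depend on $\lambda_1$: one oracle call at $0$ produces $y_1$, after which $\lambda_1:=\tfrac{7}{12}(p-1)!/(L_p\|y_1\|^{p-1})$ satisfies \eqref{eq:key4}. So fix $k\geq 1$. We may assume $f(y_k)-f(x^*)>\epsilon$, for otherwise $y_k$ is already a valid output; by \eqref{eq:tosatisfy2} this forces $A_k<\|x^*\|^2/(2\epsilon)$, hence $A_kL_p\|x^*\|^{p-1}<L_p\|x^*\|^{p+1}/(2\epsilon)$. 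I then fix the ``safe constant''
\[
\omega:=4(12p^3)^{p+1}\Bigl(1+\tfrac{L_p\|x^*\|^{p+1}}{2\epsilon}+\tfrac{2L_p\|x^*\|^{p+1}}{\epsilon}\Bigr),
\]
which by the previous estimate is an upper bound on the quantity $\omega_p(\theta)$ of Lemma \ref{lem:lipschitz} for every $\theta$ with $\Delta(\theta)\geq\epsilon/2$, and which satisfies $\omega\geq 8(12p^3)^{p+1}L_p\|x^*\|^{p+1}/\epsilon$.

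Run bisection for $T:=\lceil\log_2(40\omega)\rceil$ rounds on an interval $[\theta_\ell,\theta_r]$, initialised so that $\zeta(\theta_\ell)>\tfrac{p}{p+1}$ (take $\theta_\ell$ small enough, using $\zeta(0^+)=+\infty$) and $\zeta(\theta_r)<\tfrac12$ (take $\theta_r=1$). Each round queries the oracle once, at the midpoint $\widetilde x_{\theta_m}$; from that single query one computes $y_{\theta_m}$, $z_{\theta_m}$ and $\zeta(\theta_m)$ with no further oracle access, by minimising the now-explicit strictly convex function $F(\cdot,\widetilde x_{\theta_m})$. If some midpoint has $\zeta(\theta_m)\in[\tfrac12,\tfrac{p}{p+1}]$ we output the corresponding $\lambda_{k+1}$ and stop; otherwise we set $\theta_\ell\leftarrow\theta_m$ if $\zeta(\theta_m)>\tfrac{p}{p+1}$ and $\theta_r\leftarrow\theta_m$ if $\zeta(\theta_m)<\tfrac12$, which preserves $\zeta(\theta_\ell)>\tfrac{p}{p+1}$ and $\zeta(\theta_r)<\tfrac12$. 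If all $T$ rounds pass without stopping, we output $y=y_{\theta_\ell}$.

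For correctness of the last case, pick $\theta^\dagger\in(\theta_\ell,\theta_r)$ with $\zeta(\theta^\dagger)=\tfrac{7}{12}$ (it exists by the invariant and continuity), so $|\theta_\ell-\theta^\dagger|\leq\theta_r-\theta_\ell\leq 2^{-T}\leq\tfrac{1}{40\omega}$, and set $N:=[\theta^\dagger-\tfrac{1}{40\omega},\theta^\dagger+\tfrac{1}{40\omega}]$. I claim $\Delta(\theta^\circ)<\epsilon/2$ for some $\theta^\circ\in N$. Indeed, if instead $\Delta\geq\epsilon/2$ on all of $N$, then by Lemma \ref{lem:lipschitz} and the definition of $\omega$ the hypothesis of Lemma \ref{lem:calculusexercise} holds with $g=\zeta$ and $\theta^*=\theta^\dagger$, so $\zeta\in[\tfrac12,\tfrac23]$ on $N$, in particular $\zeta(\theta_\ell)\in[\tfrac12,\tfrac23]$; but $\theta_\ell$ became a left endpoint (whether the initial one or a later midpoint) only because $\zeta(\theta_\ell)>\tfrac{p}{p+1}\geq\tfrac23$ (here $p\geq 2$), a contradiction. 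Given such $\theta^\circ$, since $\theta\mapsto f(y_\theta)$ is $M$-Lipschitz with $M=L_p(12p^3\|x^*\|)^{p+1}$ (Lemma \ref{lem:z_lower}),
\[
f(y_{\theta_\ell})-f(x^*)=\Delta(\theta_\ell)\leq\Delta(\theta^\circ)+M|\theta_\ell-\theta^\circ|<\tfrac{\epsilon}{2}+M\bigl(2^{-T}+\tfrac{1}{40\omega}\bigr)\leq\tfrac{\epsilon}{2}+\tfrac{M}{20\omega}<\epsilon,
\]
where the last step uses $\omega\geq 8(12p^3)^{p+1}L_p\|x^*\|^{p+1}/\epsilon$, so $M/(20\omega)\leq\epsilon/160$. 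Finally the oracle count is one per round plus an $O(1)$ overhead (initialising $[\theta_\ell,\theta_r]$, and evaluating $\nabla f(y_{k+1})$ when $\lambda_{k+1}$ is returned), i.e.\ at most $T+O(1)$; expanding $\omega$ gives $T\leq(p+1)\log_2(12p^3)+\log_2\lceil L_p\|x^*\|^{p+1}/\epsilon\rceil+O(1)$, and bounding $(p+1)\log_2(12p^3)=3(p+1)\log_2 p+(p+1)\log_2 12$ crudely shows $T+O(1)\leq 30p\log_2 p+\log_2\lceil L_p\|x^*\|^{p+1}/\epsilon\rceil$ for every $p\geq 2$.

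The step I expect to be the real obstacle is the dichotomy in the third paragraph: one must rule out that $\Delta$ dips below the accuracy level somewhere inside the $\tfrac{1}{40\omega}$-neighbourhood of the bisection's limiting crossing point without this ever being observed at a queried midpoint, and then transfer ``$\Delta$ is small at an unknown $\theta^\circ$'' to ``$\Delta(\theta_\ell)$ is small'' through the Lipschitz estimate of Lemma \ref{lem:z_lower}. It is this transfer that dictates the specific constants --- the $\epsilon/2$ threshold, the factor $40$ in the neighbourhood radius, and the coefficient $4(12p^3)^{p+1}$ inside $\omega$ --- so that the accumulated slack stays below $\epsilon$ while the round count stays below $30p\log_2 p+\log_2\lceil L_p\|x^*\|^{p+1}/\epsilon\rceil$.
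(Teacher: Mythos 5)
Your proposal is correct and follows essentially the same route as the paper: bisection on $\theta$, the log-derivative bound of Lemma \ref{lem:lipschitz} fed into Lemma \ref{lem:calculusexercise} with $\omega$ controlled via $A_k\leq \|x^*\|^2/(2\epsilon)$ and $\Delta\geq\epsilon/2$, and the Lipschitz bound on $\theta\mapsto f(y_\theta)$ from Lemma \ref{lem:z_lower} to certify the fallback output. The only differences are presentational --- you run the dichotomy in contrapositive form (bisection invariant surviving forces $\Delta<\epsilon/2$ near the crossing point) and absorb the $\delta\leq\epsilon/(2M)$ requirement into $\delta\leq 1/(40\omega)$ via $\omega\geq 8M/\epsilon$ --- which do not change the substance.
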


\begin{proof}
First note that we can assume $A_k \leq \|x^*\|^2 / (2 \epsilon)$, for otherwise $f(y_k) - f(x^*) \leq \epsilon$ by Lemma \ref{lem:basic2}. Now using $\log_2(1/\delta)$ binary search step on $\zeta$, let us find $\theta$ such that $|\theta - \theta^*| \leq \delta$ for some $\theta^*$ with $\zeta(\theta^*) = \frac{7}{12}$. \\
If $\Delta(\theta) \leq \epsilon$ then we are done, so let us assume this is not the case. By the Lipschitz constant bound from Lemma \ref{lem:z_lower}, as well as choosing $\delta$ smaller than $\epsilon/2$ divided by this Lipschitz constant, we obtain that $\Delta(\theta') \geq \epsilon /2$ for any $\theta'$ such that $|\theta - \theta'| \leq 2 \delta$ (so in particular for any $\theta'$ such that $|\theta' - \theta^*|\leq \delta$). We now want to apply Lemma \ref{lem:calculusexercise} to conclude that $\zeta(\theta) \in [\frac{1}{2}, \frac{2}{3}]$. For this we need to compute a value for $\omega$ using Lemma \ref{lem:lipschitz} (and we will want $\delta$ small enough so that $\delta \leq \frac{1}{40 \omega}$). One can easily verify that the following value of $\omega$ works given the above:
\begin{align*}
\omega & \leq4(12p^{3})^{p+1}\cdot\left(1+A_{k}L_{p}\|x^{*}\|^{p-1}+\frac{L_{p}\|x^{*}\|^{p+1}}{\Delta(\theta)}\right)\\
 & \leq4(12p^{3})^{p+1}\cdot\left(1+\frac{\|x^{*}\|^{2}}{2\epsilon}L_{p}\|x^{*}\|^{p-1}+\frac{L_{p}\|x^{*}\|^{p+1}}{\epsilon/2}\right)\\
 & \leq16\cdot(12p^{3})^{p+1}\cdot\left\lceil \frac{L_{p}\|x^{*}\|^{p+1}}{\epsilon}\right\rceil .
\end{align*}
Hence we can choose
$$\frac{1}{\delta}=640\cdot(12p)^{3(p+1)}\cdot\left\lceil \frac{L_{p}\|x^{*}\|^{p+1}}{\epsilon}\right\rceil \leq p^{30p}\cdot\left\lceil \frac{L_{p}\|x^{*}\|^{p+1}}{\epsilon}\right\rceil\, $$
and binary search finishes in $\log_2(1/\delta) = 30p\log_{2}p+\log_{2}\left\lceil \frac{L_{p}\|x^{*}\|^{p+1}}{\epsilon}\right\rceil$ steps.
\end{proof}
Finally, we give the bound for $\|x_{k}-x^{*}\|$ and $\|y_{k}-x^{*}\|$.
\begin{lemma}
\label{lem:diameter} We have that $\|x_{k}-x^{*}\|\leq\|x^{*}\|$
and $\|y_{k}-x^{*}\|\leq4\|x^{*}\|$ for all $k$.
\end{lemma}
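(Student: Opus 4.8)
The plan is to establish the two inequalities in order: first $\|x_k - x^*\| \le \|x^*\|$, which follows directly from the estimate-sequence relations of Section~\ref{sec:MS} with no induction, and then $\|y_k - x^*\| \le 4\|x^*\|$ by induction on $k$, feeding in the first bound together with the trade-off \eqref{eq:Alambdatradeoff}.

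For the first bound, recall that $\psi_k$ has constant Hessian equal to the identity and is minimized at $x_k$, so $\psi_k(x^*) = \psi_k(x_k) + \frac{1}{2}\|x^* - x_k\|^2$. On one side, Lemma~\ref{lem:basic1} gives $\psi_k(x^*) \le A_k f(x^*) + \frac{1}{2}\|x^*\|^2$; on the other, Lemma~\ref{lem:basic4} applied with $\sigma = \frac{1}{2}$ (legitimate for ATD by Lemma~\ref{lem:controlstepsize}) gives $\psi_k(x_k) \ge A_k f(y_k) \ge A_k f(x^*)$. Substituting into the identity above yields $\frac{1}{2}\|x^* - x_k\|^2 \le \frac{1}{2}\|x^*\|^2$, i.e.\ $\|x_k - x^*\| \le \|x^*\|$ (for $k = 0$ this reads $\|x^*\| \le \|x^*\|$, consistently, since $x_0 = 0$).

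For the second bound, fix $k \ge 1$ and write $y_k - x^* = (y_k - \tilde{x}_{k-1}) + (\tilde{x}_{k-1} - x^*)$. Since $\lambda_k A_k = a_k^2$, keeping only the $i = k$ summand of \eqref{eq:Alambdatradeoff} (with $\sigma = \frac{1}{2}$, so its right-hand side is $\frac{4}{3}\|x^*\|^2$) gives $\frac{A_k^2}{a_k^2}\|y_k - \tilde{x}_{k-1}\|^2 \le \frac{4}{3}\|x^*\|^2$, hence $\|y_k - \tilde{x}_{k-1}\| \le \frac{2 a_k}{A_k}\|x^*\|$. For the second piece I would use $\tilde{x}_{k-1} = \frac{A_{k-1}}{A_k}y_{k-1} + \frac{a_k}{A_k}x_{k-1}$, the induction hypothesis $\|y_{k-1} - x^*\| \le 4\|x^*\|$, and the already-proven bound $\|x_{k-1} - x^*\| \le \|x^*\|$, to get $\|\tilde{x}_{k-1} - x^*\| \le \frac{A_{k-1}}{A_k}\cdot 4\|x^*\| + \frac{a_k}{A_k}\|x^*\|$. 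Adding the two pieces and using $A_k = A_{k-1} + a_k$,
$$\|y_k - x^*\| \le \frac{4 A_{k-1} + 3 a_k}{A_k}\|x^*\| = \Big(4 - \frac{a_k}{A_k}\Big)\|x^*\| \le 4\|x^*\|,$$
which closes the induction ($k = 0$: $y_0 = 0$, so $\|y_0 - x^*\| = \|x^*\|$).

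The only subtle point is the estimate for $\|y_k - \tilde{x}_{k-1}\|$: a bound of the bare form $\|y_k - \tilde{x}_{k-1}\| \le C\|x^*\|$ would \emph{not} close the induction, because $\tilde{x}_{k-1}$ already carries $y_{k-1}$ with a weight $\frac{A_{k-1}}{A_k}$ that tends to $1$, so the convex-combination estimate would drift upward. What makes it work is that \eqref{eq:Alambdatradeoff} in fact forces $\|y_k - \tilde{x}_{k-1}\| = O(a_k/A_k)\cdot\|x^*\|$ — the displacement from the momentum point is controlled precisely on the scale of the new weight $a_k/A_k$ — so the constant $4$ is reproduced with a strictly negative correction. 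Everything else is triangle inequalities together with $A_k = A_{k-1} + a_k$; one also gets $\|y_k - x_k\| \le 5\|x^*\|$ for free, which is what the later lemmas invoke.
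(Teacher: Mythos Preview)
Your argument is correct. The first half---the bound $\|x_k - x^*\| \le \|x^*\|$---is essentially the paper's own proof: it also combines the quadratic expansion $\psi_k(x^*) = \psi_k(x_k) + \tfrac12\|x^*-x_k\|^2$ with Lemma~\ref{lem:basic1} and the nonnegativity coming from Lemma~\ref{lem:basic4}.

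For the second half you take a different route. The paper does \emph{not} argue by induction; instead it unrolls the recursion $A_{k+1}D_{k+1} \le A_k D_k + a_{k+1}\|x^*\| + A_{k+1}\|y_{k+1}-\tilde{x}_k\|$ by telescoping, obtaining
\[
D_{k+1} \le \|x^*\| + \frac{1}{A_{k+1}}\sum_{j=1}^{k+1} A_j \|y_j - \tilde{x}_{j-1}\|,
\]
and then applies Cauchy--Schwarz together with the \emph{full} sum in \eqref{eq:Alambdatradeoff} and Lemma~\ref{lem:Aestimate} (to control $\sqrt{\sum_j \lambda_j}/\sqrt{A_{k+1}}$). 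Your approach is more elementary: you use only the single $i=k$ term of \eqref{eq:Alambdatradeoff}, which via $\lambda_k A_k = a_k^2$ gives $\|y_k - \tilde{x}_{k-1}\| \le \tfrac{2}{\sqrt 3}\,\tfrac{a_k}{A_k}\|x^*\|$, and then close a clean induction with slack $-a_k/A_k$. This avoids Cauchy--Schwarz and avoids Lemma~\ref{lem:Aestimate} entirely. The paper's route, in exchange, is non-inductive and yields a slightly sharper numerical constant ($1 + 4/\sqrt{3} \approx 3.31$ before rounding to $4$), but for the stated claim your argument is shorter and just as valid.
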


\begin{proof}
From Lemma \ref{lem:basic4} we have
\[\psi_{k+1}(x_{k+1}) - A_{k+1} f(y_{k+1}) \geq \sum_{i=1}^{k+1} \frac{A_{i}}{2 \lambda_{i}} \bigg( (1-\sigma^2) \|y_{i} - \tilde{x}_{i-1}\|^2\bigg) \]
Since from Lemma \ref{lem:basic1}
\[\psi_{k+1}(x_{k+1}) +\frac{1}{2}\|x^*-x_{k+1}\|^2 = \psi_{k+1}(x^*) \leq A_{k+1} f(x^*)+\frac{1}{2}\|x^*\|^2\]
altogether this gives
\begin{align*}
\sum_{i=1}^{k+1} \frac{A_{i}}{2 \lambda_{i}} \bigg( (1-\sigma^2) \|y_{i} - \tilde{x}_{i-1}\|^2\bigg) &\leq A_{k+1}(f^*-f(y_{k+1}))+\frac{1}{2}\|x^*\|^2-\frac{1}{2}\|x^*-x_{k+1}\|^2
\end{align*}
therefore we have that $\|x_{k}-x^{*}\|\leq\|x^{*}\|$ for all $k$. Let $D_{k}=\|y_{k}-x^{*}\|$. Using $\widetilde{x}_{k}=\frac{A_{k}}{A_{k+1}}y_{k}+\frac{a_{k+1}}{A_{k+1}}x_{k}$,
we have
\[
\|\widetilde{x}_{k}-x^{*}\|\leq\frac{A_{k}}{A_{k+1}}D_{k}+\frac{a_{k+1}}{A_{k+1}}\|x^{*}\|.
\]
Hence, we have $D_{k+1}\leq\frac{A_{k}}{A_{k+1}}D_{k}+\frac{a_{k+1}}{A_{k+1}}\|x^{*}\|+\|y_{k+1}-\widetilde{x}_{k}\|$. Rescaling and summing over $k$, we have 
\begin{align*}
D_{k+1} & \leq\|x^{*}\|+\|y_{k+1}-\widetilde{x}_{k}\|+\frac{A_{k}}{A_{k+1}}\|y_{k}-\widetilde{x}_{k-1}\|+\frac{A_{k-1}}{A_{k+1}}\|y_{k-1}-\widetilde{x}_{k-2}\|+\cdots\\
 & \leq\|x^{*}\|+\frac{1}{A_{k+1}}\sum_{j=1}^{k+1}A_{j}\|y_{j}-\widetilde{x}_{j-1}\|\\
 & \leq\|x^{*}\|+\frac{\sqrt{\sum_{j=1}^{k+1}A_{j}\lambda_{j}}}{A_{k+1}}\sqrt{\sum_{j=1}^{k+1}\frac{A_{j}}{\lambda_{j}}\|y_{j}-\widetilde{x}_{j-1}\|^{2}}\\
 & \leq\|x^{*}\|+\frac{\sqrt{\sum_{j=1}^{k+1}\lambda_{j}}}{\sqrt{A_{k+1}}}\sqrt{\frac{\|x^{*}\|^{2}}{1-\sigma^{2}}}\\
 & \leq4\|x^{*}\|
\end{align*}
where we used $A_{j}$ is increasing and \eqref{eq:Alambdatradeoff} in the second to last
equation, and Lemma \ref{lem:Aestimate} and $\sigma=\frac{1}{2}$ for the last.
\end{proof}

\bibliographystyle{plainnat}
\bibliography{bib}

\end{document}